\documentclass[11pt,twoside,reqno]{amsart}
\usepackage[all]{xy}   
\usepackage {amssymb,latexsym,amsthm,amsmath,setspace,bm}
\usepackage[colorlinks, linkcolor=blue, citecolor=red]{hyperref}
\usepackage{graphicx}
\usepackage{mathtools}
\usepackage{xcolor}
\usepackage[utf8]{inputenc}
\usepackage{tikz}
\usepackage{enumitem}

\topmargin=1.2cm
\textheight = 8.3in
\textwidth = 5.8in
\setlength{\oddsidemargin}{.8cm}
\setlength{\evensidemargin}{.8cm}

\newcommand{\secref}[1]{Section~\ref{#1}}
\newcommand{\subsecref}[1]{Subsection~\ref{#1}}
\newcommand{\thmref}[1]{Theorem~\ref{#1}}
\newcommand{\lemref}[1]{Lemma~\ref{#1}}
\newcommand{\remref}[1]{Remark~\ref{#1}}

\newcommand{\corref}[1]{Corollary~\ref{#1}}

\makeatletter
\def\imod#1{\allowbreak\mkern10mu({\operator@font mod}\,\,#1)}
\makeatother

\newtheorem{theorem}{Theorem}[subsection]
\newtheorem{lemma}[theorem]{Lemma}
\newtheorem{corollary}[theorem]{Corollary}

\newtheorem*{theorem*}{Theorem}

\theoremstyle{definition}
\newtheorem{remark}[theorem]{Remark}

\newtheorem{example}[theorem]{Example}
\newtheorem{definition}[theorem]{Definition}

\numberwithin{equation}{section}

\newcommand{\ignore}[1]{}

\newcommand{\nc}{\newcommand}

\nc{\Cal}{\cal} \nc{\Xp}[1]{X^+(#1)} \nc{\Xm}[1]{X^-(#1)}
\nc{\on}{\operatorname} \nc{\Z}{{\mathbb{Z}}} \nc{\J}{{\cal J}} \nc{\C}{{\mathbb{C}}} \nc{\Q}{{\bold Q}} \nc{\R}{{\mathbb{R}}}
\nc{\K}{\bold{\kappa}}

\nc{\F}{\operatorname{\boF}}
\nc{\pr}{\operatorname{\bold{pr}}}
\nc{\ev}{\operatorname{{ev}}}
\nc{\ch}{\operatorname{{ch}}}
\nc{\hook}{\lceil} 
\nc{\HLn}{\noindent\makebox[\linewidth]{\rule{\textwidth}{1pt}}}

\nc{\N}{{\Bbb N}} \nc\boa{\bold a} \nc\bob{\bold b} \nc\boc{\bold c} \nc\bod{\bold d} \nc\boe{\bold e} \nc\bof{\bold f} \nc\bog{\bold g}
\nc\boh{\bold h} \nc\boi{\bold i} \nc\boj{\bold j} \nc\bok{\bold k} \nc
\bol{\bold l} \nc\bom{\bold m} \nc\bon{\bold n} \nc\boo{\bold o}
\nc\bop{\bold p} \nc\boq{\bold q} \nc\bor{\bold r} \nc\bos{\bold s} \nc\boT{\bold t} \nc\boF{\bold F} \nc\bou{\bold u} \nc\bov{\bold v}
\nc\bow{\bold w} \nc\boz{\bold z} \nc\boy{\bold y} \nc\ba{\bold A} \nc\bb{\bold B} \nc\bc{\bold C} \nc\bd{\bold D} \nc\be{\bold E} \nc\bg{\bold
	G} \nc\bh{\bold H} \nc\bi{\bold I} \nc\bj{\bold J} \nc\bk{\bold K} \nc\bl{\bold L} \nc\bM{\bold M} \nc\bn{\bold N} \nc\bo{\bold O} \nc\bp{\bold
	P} \nc\bq{\bold Q} \nc\br{\bold R} \nc\bs{\bold S} \nc\bt{\bold T} \nc\bu{\bold U} \nc\bv{\bold V} \nc\bw{\bold W} \nc\bz{\bold Z} \nc\bx{\bold
	x} \nc\KR{\bold{KR}} \nc\rk{\bold{rk}} \nc\het{\text{ht }}\nc\wt{\text{wt }}

\nc\udi{\underline i} \nc\udj{\underline j}

\nc\fn{{fin}}  \nc\af{{aff}}  \nc\tr{{tor}} \nc\btilde{\bold{\tilde{\bold{H}}}}

\nc{\mpp}{\rotatebox[origin=c]{180}{\pm}}

\nc\eps{\epsilon}
\nc\toa{\tilde a} \nc\tob{\tilde b} \nc\toc{\tilde c} \nc\tod{\tilde d} \nc\toe{\tilde e} \nc\tof{\tilde f} \nc\tog{\tilde g} \nc\toh{\tilde h}
\nc\toi{\tilde i} \nc\toj{\tilde j} \nc\tok{\tilde k} \nc\tol{\tilde l} \nc\tom{\tilde m}  \nc\ton{\tilde n} \nc\too{\tilde o} \nc\toq{\tilde q}
\nc\tor{\tilde r} \nc\tos{\tilde s} \nc\toT{\tilde t} \nc\tou{\tilde u} \nc\tov{\tilde v} \nc\tow{\tilde w} \nc\toz{\tilde z}
\providecommand{\keywords}[1]
{
	\small	
	\textbf{Keywords---} #1
}

\onehalfspacing
\begin{document}
	\setcounter{section}{0}
	\setcounter{tocdepth}{1}
	
	\title[Tensor Decomposition of Demazure Crystals]{Tensor Decomposition of Demazure Crystals for Symmetrizable Kac-Moody Lie algebras }
	\author[Divya Setia]{Divya Setia}
	\address{Institute of Mathematics, Polish Academy of Sciences, Kraków 31-027, Poland}
	\email{divyasetia01@gmail.com}
	\date{}


	
	\begin{abstract} 
		We study the tensor product of Demazure crystals for symmetrizable Kac-Moody Lie algebras. It is not necessary that the tensor product of Demazure crystals is isomorphic to a disjoint union of Demazure crystals. In this paper, we provide necessary and sufficient conditions for the decomposition of the tensor product of Demazure crystals as a disjoint union of Demazure crystals. Our results are the generalization of the results proved by Anthony Joseph and Takafumi Kouno. As an application, we obtain a sufficient condition when the product of Demazure characters is a linear combination of Demazure characters with nonnegative integer coefficients. In particular, we obtain a partial solution for the key positivity problem.    
	\end{abstract}
	
	\subjclass[2020]{Primary: 17B37, 17B10, 05E10; Secondary: 05E05}
	\keywords{Demazure Crystals, Symmetrizable Kac-Moody Lie algebras, Extremal subsets, Demazure characters, Key polynomials.}
	\maketitle
	\section{INTRODUCTION}
	Let $\mathfrak{g}$ be a symmetrizable Kac-Moody Lie algebra and denote by $U_q(\mathfrak{g})$ its associated quantized universal enveloping algebra. In \cite{Kq-analogue}, Kashiwara introduced the notion of crystal basis $B(\lambda)$ viewed as bases at $q=0$ of an integrable irreducible highest weight $U_q(\mathfrak{g})$-module $V(\lambda)$ generated by the highest weight vector $v_{\lambda}$. An upper normal crystal $B(\infty)$ was also introduced in \cite{Kq-analogue} and it was proved that $B(\infty)$ is a highest weight crystal. $B(\infty)$ is a crystal analogue of a Verma module. Crystal basis is an important combinatorial tool to study representations of symmetrizable Kac-Moody Lie algebras.
	
	Certain subspaces of $V(\lambda)$ can be explored using crystal basis. For a Weyl group element $w$, the Demazure module $V_w(\lambda)$ is a module of a Borel subalgebra of $\mathfrak{g}$ and generated by the extremal weight vector $v_{w\lambda}$. A crystal basis $B_w(\lambda)\subseteq B(\lambda)$ of the Demazure module $V_w(\lambda)$ was given in \cite{K1demazure} and known as the Demazure crystal. Similarly the limiting Demazure crystal $B_w(\infty)\subseteq B(\infty)$ was also introduced in \cite{K1demazure}. The character formula for Demazure modules was proved by Anthony Joseph in \cite{Anthonydemazureformula} and Kashiwara gave a new combinatorial proof of the Demazure character formula in \cite{K1demazure} using Demazure crystals. 
	
	The tensor product of irreducible highest weight crystals $B(\lambda)\otimes B(\mu)$ can be defined using a simple combinatorial formula. Using the decomposition of the tensor product of irreducible highest weight modules $V(\lambda)\otimes V(\mu)$ (see \cite{L2, L1}), it was proved that the tensor product of corresponding highest weight crystals $B(\lambda)\otimes B(\mu)$ decomposes as a disjoint union of connected highest weight crystals. A similar decomposition problem can also be explored for the tensor product of Demazure crystals $B_v(\lambda)\otimes B_w(\mu)$, where $v,w$ are elements of the Weyl group $W$ and $\lambda,\mu \in P^+$ the set of dominant weights. In general, it is not necessary that the tensor product of Demazure crystals can be decomposed as a disjoint union of Demazure crystals. The tensor product of Demazure crystals $B_e(\lambda)\otimes B_w(\mu)$ that is $b_{\lambda} \otimes B_w(\mu)$ was studied in \cite{LLM} for finite dimensional Lie algebras and it was proved that $B_e(\lambda)\otimes B_w(\mu)$ decomposes as a disjoint union of Demazure crystals using L-S paths.
	
	In \cite{Anthony}, Anthony Joseph generalized the result for symmetrizable Kac-Moody Lie algebras and proved that both $B_e(\lambda)\otimes B_w(\mu)$ and $B_e(\lambda)\otimes B_w(\infty)$ decomposes as a disjoint union of Demazure crystals using Kashiwara crystal basis. In \cite{Assaf}, authors introduced the notion of extremal subsets that is a subset $X\subseteq B(\lambda)$ is said to be extremal if for any $i$-string $S\subset B(\lambda)$ with highest weight element $b$, its intersection $S\cap X$ is either $\emptyset, S$ or $\{b\}$. All Demazure crystals are extremal subsets (see \cite{K1demazure}) but not all extremal subsets are Demazure crystals. Using the results proved by Anthony Joseph in \cite{Anthony}, we provide a complete characterization for when the tensor product of Demazure crystals $B_v(\lambda)\otimes B_w(\mu)$ decomposes as a disjoint union of Demazure crystals in the following theorem.
	 	\begin{theorem}\label{thm 1}
	 	For $\lambda,\mu \in P^+$ and $v,w\in W$, the following statements are equivalent.
	 	\begin{enumerate}
	 		\item $B_v(\lambda)\otimes B_w(\mu)$ is isomorphic to a disjoint union of Demazure crystals.
	 		\item $v_{\min}^{\lambda}\in \langle\{s_i\,|\, \langle w\mu,\alpha_i^\vee\rangle \leq 0\}\rangle$.
	 		\item $B_v(\lambda)\otimes B_w(\mu)$ is an extremal subset of $B(\lambda)\otimes B(\mu)$.
	 	\end{enumerate}
	 \end{theorem}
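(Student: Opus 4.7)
The plan is to establish the cyclic chain of implications $(1)\Rightarrow(3)\Rightarrow(2)\Rightarrow(1)$, closing the equivalence. The main external input will be Joseph's decomposition theorem from \cite{Anthony}, which handles the case $v=e$.

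The implication $(1)\Rightarrow(3)$ is essentially formal. Every Demazure crystal is extremal in its ambient highest-weight crystal by \cite{K1demazure}, and this extremality is inherited inside $B(\lambda)\otimes B(\mu)$ because $i$-strings are contained in a single connected component. One then verifies that a disjoint union of extremal subsets is extremal: for an $i$-string $S$ with top $b$, each summand meets $S$ in $\emptyset$, $\{b\}$, or $S$, and mutual disjointness forbids any total intersection outside $\{\emptyset,\{b\},S\}$. For the contrapositive of $(3)\Rightarrow(2)$, I assume $v_{\min}^{\lambda}\notin W_J$ where $W_J:=\langle s_i:\langle w\mu,\alpha_i^\vee\rangle\leq 0\rangle$. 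By standard parabolic subgroup theory, every reduced expression of $v_{\min}^{\lambda}$ must involve some $s_i$ with $\langle w\mu,\alpha_i^\vee\rangle>0$. Choosing such an $i$ with care, and using the tensor product rule for $\tilde f_i$ together with $\epsilon_i(b_{w\mu})=0$ in this regime, one exhibits an explicit $i$-string whose intersection with $B_v(\lambda)\otimes B_w(\mu)$ is neither $\emptyset$, nor the full string, nor its top alone, contradicting extremality.

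The substantive direction is $(2)\Rightarrow(1)$. By (2) I may fix a reduced expression $v_{\min}^{\lambda}=s_{i_1}\cdots s_{i_k}$ with every $i_j\in J:=\{i:\langle w\mu,\alpha_i^\vee\rangle\leq 0\}$. Joseph's theorem supplies
\[
\{b_\lambda\}\otimes B_w(\mu)\;=\;\bigsqcup_\sigma B_{u_\sigma}(\nu_\sigma).
\]
I then prove by induction on $k$ that
\[
B_v(\lambda)\otimes B_w(\mu)\;=\;F_{i_1}\cdots F_{i_k}\bigl(\{b_\lambda\}\otimes B_w(\mu)\bigr),
\]
where $F_i$ is the Demazure crystal operator, and conclude via the standard recursion $F_i B_u(\nu)\in\{B_u(\nu),B_{s_iu}(\nu)\}$ that each $F_{i_1}\cdots F_{i_k}B_{u_\sigma}(\nu_\sigma)$ is again a Demazure crystal, with disjointness propagating from the initial decomposition by tracking highest-weight elements.

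The main obstacle is the inductive step in $(2)\Rightarrow(1)$: one needs a compatibility identity of the form
\[
F_i\bigl(X\otimes B_w(\mu)\bigr)\;=\;(F_iX)\otimes B_w(\mu)
\]
for $X$ an appropriate $\tilde e_i$-closed subset of $B(\lambda)$ and $i\in J$. Establishing this rigorously demands a case analysis of the tensor product rule to confirm that every application of $\tilde f_i$ to an element $x\otimes y$ with $x\in X$ and $y\in B_w(\mu)$ remains inside $(F_iX)\otimes B_w(\mu)$; it is precisely here that the hypothesis $\langle w\mu,\alpha_i^\vee\rangle\leq 0$ enters essentially, through the control it gives over $\epsilon_i$ at the tops of $i$-strings in $B_w(\mu)$. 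A secondary technical point is the explicit construction of the offending $i$-string in $(3)\Rightarrow(2)$, which requires extracting the correct simple reflection from a chosen reduced word for $v_{\min}^{\lambda}$.
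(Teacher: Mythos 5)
Your proposal follows essentially the same route as the paper: the cyclic chain $(1)\Rightarrow(3)\Rightarrow(2)\Rightarrow(1)$, with $(1)\Rightarrow(3)$ from extremality of Demazure crystals and of unions, $(3)\Rightarrow(2)$ by extracting a simple reflection $s_{i_j}$ with $\langle w\mu,\alpha_{i_j}^\vee\rangle>0$ from a reduced word for $v_{\min}^{\lambda}$ and exhibiting an $i_j$-string violating extremality, and $(2)\Rightarrow(1)$ via Joseph's decomposition of $b_\lambda\otimes B_w(\mu)$ together with the identity $B_v(\lambda)\otimes B_w(\mu)=T_{i_1}\cdots T_{i_k}(b_\lambda\otimes B_w(\mu))$ and the recursion $T_iB_u(\nu)\in\{B_u(\nu),B_{s_iu}(\nu)\}$. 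The ``compatibility identity'' you flag as the main obstacle is exactly the content of the paper's Lemma~\ref{lem2}, proved by the same case analysis ($\langle w\mu,\alpha_{i_j}^\vee\rangle<0$ versus $=0$) you anticipate.
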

Here $v_{\min}^\lambda$ denotes the minimal length representative of the coset $vW_{\lambda}$, where $W_{\lambda}:= \{w\in W|\, w\lambda = \lambda\}$. \thmref{thm 1} is a generalization of the results proved in \cite{Knouo, Assaf} for finite dimensional Lie algebras to symmetrizable Kac-Moody Lie algebras. 

For $i\in I = \{1,2,\cdots n\}$, let $e_i$ and $f_i$ be the raising and lowering operator respectively and an element $b$ of a crystal $B$ is said to be primitive if $e_i(b) =0,\, \forall i\in I$. For $\mu \in P^+ \cup \{\infty\}$, denote by $B(\mu)^\lambda$, the set of all $b\in B(\mu)$ such that $b_{\lambda} \otimes b$ is primitive in $B(\lambda)\otimes B(\mu)$ and define $B_w(\mu)^\lambda = B(\mu)^\lambda \cap B_w(\mu)$. Using the notion of primitive elements we describe the Demazure crystals that appear in the decomposition if $B_v(\lambda)\otimes B_w(\mu)$ is isomorphic to a disjoint union of Demazure crystals.
 \begin{theorem}\label{thm 2}
 	Let $\lambda, \mu \in P^+,\,v,w\in W, $ and $s_{i_1}\cdots s_{i_k}$ be a reduced expression of $v_{\min}^\lambda$ in $W$. Assume that $v_{\min}^{\lambda}\in \langle \{s_i\, | \langle w\mu,\alpha_{i}^{\vee}\rangle\leq 0\} \rangle$, then for all $b\in B_w(\mu)^\lambda$ there exist $u(b,v)\in W$ such that
 	$$B_v(\lambda)\otimes B_w(\mu)\cong \coprod_{b\in B_w(\mu)^{\lambda}}B_{u(b,v)}(\lambda+wt(b)).$$ 
 \end{theorem}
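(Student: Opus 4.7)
The plan is to invoke \thmref{thm 1} for the existence of a decomposition into Demazure crystals and then identify the pieces by matching them with the primitive elements of the tensor product. By \thmref{thm 1}, the hypothesis $v_{\min}^\lambda \in \langle \{s_i \,|\, \langle w\mu,\alpha_i^\vee\rangle \leq 0\}\rangle$ guarantees an isomorphism $B_v(\lambda)\otimes B_w(\mu) \cong \coprod_j B_{u_j}(\nu_j)$ of crystals for some Demazure crystals $B_{u_j}(\nu_j)$. It will then remain to show that the indexing set is naturally $B_w(\mu)^\lambda$ and that $\nu_j = \lambda + \text{wt}(b)$ whenever the $j$th piece corresponds to $b \in B_w(\mu)^\lambda$.

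First I would characterize the primitive elements of $B_v(\lambda)\otimes B_w(\mu)$. A direct application of the tensor product rule for $e_i$ shows that any primitive $b_1\otimes b_2 \in B(\lambda)\otimes B(\mu)$ must satisfy $b_1 = b_\lambda$: if some $e_i(b_1)\neq 0$, then in the case $\varphi_i(b_1) \geq \varepsilon_i(b_2)$ one obtains $e_i(b_1\otimes b_2) = e_i(b_1)\otimes b_2 \neq 0$, while in the case $\varphi_i(b_1) < \varepsilon_i(b_2)$ primitivity forces $e_i(b_2) = 0$, hence $\varepsilon_i(b_2) = 0$, contradicting $\varphi_i(b_1) \geq 0$. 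With $b_1 = b_\lambda$, primitivity reduces to $\varepsilon_i(b_2) \leq \langle \lambda,\alpha_i^\vee\rangle$ for all $i$, i.e.\ $b_2 \in B(\mu)^\lambda$. Since $b_\lambda \in B_v(\lambda)$ and since $B_v(\lambda)$ and $B_w(\mu)$ are closed under the raising operators, $B_v(\lambda)\otimes B_w(\mu)$ is closed under $e_i$ in the ambient crystal, and its primitive elements are exactly $\{b_\lambda\otimes b \,|\, b \in B_w(\mu)^\lambda\}$.

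Next I would observe that every Demazure crystal $B_u(\nu)$ contains a unique primitive element, namely $b_\nu$: for any other $b \in B_u(\nu)$ some $e_i(b)$ is nonzero in $B(\nu)$, and this $e_i(b)$ lies in $B_u(\nu)$ by the closure property of Demazure crystals. Consequently the pieces of $\coprod_j B_{u_j}(\nu_j)$ are in bijection with the primitive elements of $B_v(\lambda)\otimes B_w(\mu)$, and the piece containing $b_\lambda\otimes b$ has highest weight $\text{wt}(b_\lambda\otimes b) = \lambda + \text{wt}(b)$. Calling the corresponding Weyl group element $u(b,v)$ yields the desired decomposition.

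The hard part is absorbed into \thmref{thm 1}; within the remaining argument the only delicate step is the tensor product rule calculation ruling out primitive elements with $b_1 \neq b_\lambda$, which in turn rests on the sign constraint $\varepsilon_i,\varphi_i \geq 0$. Since \thmref{thm 2} asserts only the existence of $u(b,v)$, no explicit formula for $u(b,v)$ in terms of $b$ and $v$ needs to be produced; the reduced expression $s_{i_1}\cdots s_{i_k}$ of $v_{\min}^\lambda$ enters only through the verification of the hypothesis of \thmref{thm 1}.
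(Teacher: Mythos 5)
Your argument is circular as it stands. The only proof in the paper of the implication you invoke --- that $v_{\min}^{\lambda}\in\langle\{s_i\,|\,\langle w\mu,\alpha_i^\vee\rangle\leq 0\}\rangle$ forces $B_v(\lambda)\otimes B_w(\mu)$ to decompose into Demazure crystals, i.e.\ $(2)\Rightarrow(1)$ of \thmref{thm 1} --- is precisely the statement you are asked to prove: in the body of the paper, $(2)\Rightarrow(1)$ of \thmref{main thm} is deduced from \thmref{disjoint union}, which \emph{is} \thmref{thm 2}. There is no independent route to $(2)\Rightarrow(1)$ (the paper establishes $(1)\Rightarrow(3)\Rightarrow(2)$ and then $(2)\Rightarrow(1)$ only via \thmref{disjoint union}), so by deferring the existence of the decomposition to \thmref{thm 1} you have deferred the entire content of the theorem to itself. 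The substantive work, none of which appears in your proposal, is: (i) \lemref{lem2}, showing that under the hypothesis $B_v(\lambda)\otimes B_w(\mu)=T_{v_{\min}^{\lambda}}(b_\lambda\otimes B_w(\mu))$, which rests on the observation that $\langle w\mu,\alpha_{i_j}^\vee\rangle\leq 0$ forces $T_{i_j}B_w(\mu)=B_w(\mu)$; (ii) feeding Joseph's decomposition $b_\lambda\otimes B_w(\mu)=\coprod_{b}T_{y_{w,b}^{\lambda}}(b_\lambda\otimes b)$ from \thmref{identity result} into this identity; and (iii) pushing $T_{i_1}\cdots T_{i_k}$ through each piece using \lemref{Ti invariant} to arrive at $B_{u(b,v)}(\lambda+wt(b))$ with $u(b,v)$ built inductively from $y_{w,b}^{\lambda}$.

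The non-circular portions of your argument are correct but only handle bookkeeping: the characterization of the primitive elements of $B_v(\lambda)\otimes B_w(\mu)$ as $\{b_\lambda\otimes b\,|\,b\in B_w(\mu)^{\lambda}\}$, the uniqueness of the primitive element $b_\nu$ in a Demazure crystal $B_u(\nu)$, and the resulting identification of the highest weights as $\lambda+wt(b)$ are all fine, and they would indeed pin down the indexing set and the weights \emph{if} a decomposition into Demazure crystals were already known to exist. (This matching step is, incidentally, a slightly cleaner way to organize the identification of the pieces than the paper's explicit tracking of each component as $T_{i_1}\cdots T_{i_k}T_{y_{w,b}^{\lambda}}(b_\lambda\otimes b)$.) To repair the proof you must supply the existence argument directly, essentially reproducing \lemref{lem2} and the $T_i$-computation above, rather than citing \thmref{thm 1}.
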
	
As an application of \thmref{thm 1}, we provide a criterion when the product of Demazure characters is a linear combination of Demazure characters with nonnegative integer coefficients. For $i\in I$, define the Demazure operator $\Delta_i$ on $P$ by $$\Delta_i e^\mu = \frac{e^\mu - e^{s_i \mu -\alpha_i}}{1-e^{-\alpha_i}}.$$ Given a reduced decomposition $s_{i_1}\cdots s_{i_k}$ of $w$ in $W$, set $\Delta_w = \Delta_{i_1}\cdots \Delta_{i_k}$ then it follows from \cite{K1demazure, L2} that $\ch B_w(\mu) = \Delta_w e^\mu.$ Using \thmref{thm 1} and \thmref{thm 2}, we obtain the following result.
\begin{theorem}\label{thm3}
		For $\lambda,\mu \in P^+$ and  $v,w\in W$, assume that $v_{\min}^{\lambda} \in \langle \{s_i\,| \langle  w\mu, \alpha_i^{\vee} \rangle\leq 0 \}\rangle$. Then the following statements hold.
		\begin{enumerate}
			\item $\ch (B_v(\lambda)\otimes B_w(\mu)) = \Delta_v(e^\lambda(\Delta_w)e^\mu).$
			\item $\Delta_v(e^\lambda(\Delta_w)e^\mu)$ is a linear combination of some Demazure characters with nonnegative integer coefficients.
		\end{enumerate} 
\end{theorem}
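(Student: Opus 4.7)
My plan is to derive part (2) directly from \thmref{thm 2}, and to obtain part (1) by combining multiplicativity of crystal characters with a Leibniz-type identity for Demazure operators whose key input is an $s_i$-invariance property of $\Delta_w e^{\mu}$.

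For \emph{part (2)}, under the stated hypothesis, \thmref{thm 2} provides
$$B_v(\lambda)\otimes B_w(\mu)\cong \coprod_{b\in B_w(\mu)^{\lambda}}B_{u(b,v)}(\lambda+wt(b)).$$
Primitivity of $b_{\lambda}\otimes b$ in $B(\lambda)\otimes B(\mu)$ forces $\lambda+wt(b)$ to be dominant for every $b\in B_w(\mu)^{\lambda}$, so each $\ch B_{u(b,v)}(\lambda+wt(b))=\Delta_{u(b,v)}e^{\lambda+wt(b)}$ is a genuine Demazure character. Summing over $b$ yields the claimed expansion, with coefficient $1$ on every term.

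For \emph{part (1)}, by the multiplicativity of crystal characters under the tensor product,
$$\ch(B_v(\lambda)\otimes B_w(\mu))=\ch B_v(\lambda)\cdot\ch B_w(\mu)=\Delta_v(e^{\lambda})\cdot\Delta_w(e^{\mu}),$$
so the identity reduces to the operator equality $\Delta_v(e^{\lambda}\cdot\Delta_w e^{\mu})=\Delta_v(e^{\lambda})\cdot\Delta_w e^{\mu}$. The main tool is the Leibniz rule $\Delta_i(fg)=\Delta_i(f)g+s_i(f)(\Delta_i g-g)$, which collapses to $\Delta_i(fg)=\Delta_i(f)g$ whenever $g$ is $s_i$-invariant. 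The key claim is: if $\langle w\mu,\alpha_i^{\vee}\rangle\le 0$, then $\Delta_w e^{\mu}$ is $s_i$-invariant. I would prove this by two cases. If $\langle w\mu,\alpha_i^{\vee}\rangle<0$, dominance of $\mu$ forces $w^{-1}\alpha_i<0$, so $s_iw<w$ in Bruhat order; then $\Delta_w=\Delta_i\Delta_{s_iw}$ with lengths adding, and idempotency $\Delta_i^2=\Delta_i$ gives $\Delta_i\Delta_w e^{\mu}=\Delta_w e^{\mu}$. If $\langle w\mu,\alpha_i^{\vee}\rangle=0$, then $s_iw$ and $w$ represent the same coset in $W/W_{\mu}$, so the Demazure crystal $B_{s_iw}(\mu)=B_w(\mu)$ (since it depends only on the minimal coset representative), whence $\Delta_{s_iw}e^{\mu}=\Delta_w e^{\mu}$; a brief Bruhat analysis reduces each of the two sub-cases to $\Delta_i\Delta_w e^{\mu}=\Delta_w e^{\mu}$.

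Since by hypothesis $v_{\min}^{\lambda}$ lies in the standard parabolic subgroup $W_J$ with $J=\{i:\langle w\mu,\alpha_i^{\vee}\rangle\le 0\}$, it admits a reduced expression $s_{j_1}\cdots s_{j_l}$ with each $s_{j_k}\in J$. Applying the simplified Leibniz rule iteratively along this expression (the factor $\Delta_w e^{\mu}$ remains $s_{j_k}$-invariant at every step) produces
$$\Delta_{v_{\min}^{\lambda}}(e^{\lambda}\cdot\Delta_w e^{\mu})=\Delta_{v_{\min}^{\lambda}}(e^{\lambda})\cdot\Delta_w e^{\mu}=\Delta_v(e^{\lambda})\cdot\Delta_w e^{\mu},$$
the last equality holding because $\Delta_v e^{\lambda}$ depends only on the coset $vW_{\lambda}$. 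Combined with part (2), this both establishes the character formula in (1) and expresses $\Delta_v(e^{\lambda}\cdot\Delta_w e^{\mu})$ as a nonnegative integer combination of Demazure characters. The main technical obstacle will be a clean verification of the $s_i$-invariance claim in the full Kac--Moody generality, especially the coset identification $B_{s_iw}(\mu)=B_w(\mu)$ in the zero-pairing sub-case, together with the minor check that $\Delta_v$ and $\Delta_{v_{\min}^{\lambda}}$ act identically on $e^{\lambda}\cdot\Delta_w e^{\mu}$ using that the extra factor from $W_{\lambda}$ stabilises $e^{\lambda}$.
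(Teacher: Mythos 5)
Your part (2) coincides with the paper's: both take characters of the decomposition in \thmref{thm 2} (\thmref{disjoint union} in the body), and your observation that primitivity of $b_\lambda\otimes b$ forces $\lambda+wt(b)\in P^+$ is the right justification that each summand contributes a genuine Demazure character with coefficient one. Your part (1), however, takes a genuinely different route. The paper first proves the set-level identity $B_v(\lambda)\otimes B_w(\mu)=T_v(b_\lambda\otimes B_w(\mu))$ (\lemref{lem2}) and then applies the formula $\ch T_y(B_e(\lambda)\otimes B_w(\mu))=\Delta_y(e^\lambda\Delta_we^\mu)$ of \corref{cor 2}; you instead work entirely at the level of characters, using multiplicativity of $\ch$ under $\otimes$ together with the twisted Leibniz rule $\Delta_i(fg)=\Delta_i(f)\,g$ for $s_i$-invariant $g$, and the observation that $\langle w\mu,\alpha_i^\vee\rangle\le 0$ forces $s_i(\Delta_we^\mu)=\Delta_we^\mu$ (via $\Delta_ih=h\iff s_ih=h$, idempotency, and the same two cases $<0$ and $=0$ that appear in the proof of \lemref{lem2}). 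That case analysis is correct, and your route has the merit of bypassing \lemref{lem2} entirely for the character identity.

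The genuine gap is the step you dismiss as a ``minor check'': that $\Delta_v$ and $\Delta_{v_{\min}^\lambda}$ agree on $e^\lambda\cdot\Delta_we^\mu$. Writing $v=v_{\min}^\lambda u$ with $u\in W_\lambda$ and lengths adding, the $s_j$-invariance of $e^\lambda$ for a generator $s_j$ of $W_\lambda$ only yields $\Delta_j(e^\lambda h)=e^\lambda\Delta_j h$, not $e^\lambda h$, and there is no reason for $h=\Delta_we^\mu$ to be $s_j$-invariant for such $j$. In fact the identity fails: take $\mathfrak{g}=\mathfrak{sl}_3$, $\lambda=\mu=\omega_1$, $v=s_2$, $w=s_1$, so that $v_{\min}^\lambda=e$ and the hypothesis holds with $J=\{1\}$. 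Then $\ch(B_{s_2}(\omega_1)\otimes B_{s_1}(\omega_1))=e^{2\omega_1}+e^{2\omega_1-\alpha_1}$, whereas $\Delta_{s_2}(e^{\omega_1}\Delta_{s_1}e^{\omega_1})=e^{2\omega_1}+e^{2\omega_1-\alpha_1}+e^{2\omega_1-\alpha_1-\alpha_2}$. So part (1) as literally stated is false unless one reads $\Delta_v$ as $\Delta_{v_{\min}^\lambda}$ (equivalently, assumes $v=v_{\min}^\lambda$); your argument correctly proves that corrected version, and the unfillable step reflects a defect of the statement rather than of your method. Note that the paper's own deduction carries the same blemish: \lemref{lem2}(2) requires $v$ itself, not merely $v_{\min}^\lambda$, to lie in $\langle\{s_i\mid\langle w\mu,\alpha_i^\vee\rangle\le 0\}\rangle$, so \corref{cor3} should likewise be stated with $v_{\min}^\lambda$ in place of $v$.
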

In particular, if we assume $\mathfrak{g}$ to be finite dimensional simple Lie algebra of type $A$ then the character of a Demazure crystal is same as the key polynomial and we obtain a sufficient condition for the key positivity problem. In \cite{RS}, it was proved that a product of key polynomials is a linear combination of key polynomials with integer coefficients. Therefore, the key positivity problem is to prove that the coefficients are nonnegative. Using \thmref{thm3}, we obtain a partial solution for the key positivity problem without using L-S paths similar to the results proved by Kouno in \cite{Knouo}.  

We also study the tensor product of Demazure crystals $B_v(\lambda)\otimes B_w(\infty)$ and obtain the following results.
\begin{theorem}\label{thm 4}
	For $\lambda \in P^+$ and $v,w\in W$, the following statements are equivalent.
	\begin{enumerate}
		\item $B_v(\lambda)\otimes B_w(\infty)$ is isomorphic to a disjoint union of Demazure crystals.
		\item $v_{\min}^{\lambda}\in \langle\{s_i\,|\, l(s_i w)< l(w)\}\rangle$.
		\item $B_v(\lambda)\otimes B_w(\infty)$ is an extremal subset of $B(\lambda)\otimes B(\infty)$.
	\end{enumerate}
\end{theorem}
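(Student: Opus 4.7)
The plan mirrors the three-part strategy for \thmref{thm 1}, with the parabolic condition on $w\mu$ replaced by the left-descent condition on $w$. Set $J := \{s_i \in I : \ell(s_iw) < \ell(w)\}$, so that (2) reads $v_{\min}^{\lambda}\in W_J:=\langle J\rangle$. The guiding structural fact is that $s_i \in J$ if and only if $B_w(\infty)$ is a union of full $i$-strings of $B(\infty)$ (equivalently, is stable under both $e_i$ and $f_i$); this follows from the inductive definition of $B_w(\infty)$ applied to a reduced expression of $w$ beginning with $s_i$, and is the $B(\infty)$-analog of the closure property used in the proof of \thmref{thm 1}. Direction (1) $\Rightarrow$ (3) is then immediate, because each Demazure crystal is extremal in its ambient highest-weight crystal by \cite{K1demazure} and every $i$-string in $B(\lambda)\otimes B(\infty)$ lies in a single connected component, so a disjoint union of Demazure crystals remains extremal when viewed inside the tensor product.

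For (2) $\Rightarrow$ (1), the main positive direction, I would induct on $\ell(v_{\min}^{\lambda})$. The base case $v_{\min}^{\lambda} = e$ is Joseph's decomposition of $b_\lambda \otimes B_w(\infty)$ from \cite{Anthony}. In the inductive step, pick $s_i \in J$ with $\ell(s_iv_{\min}^{\lambda}) = \ell(v_{\min}^{\lambda})-1$ (available since any reduced expression of an element of $W_J$ begins with a letter of $J$) and establish the key identity
\begin{equation*}
B_{v_{\min}^{\lambda}}(\lambda) \otimes B_w(\infty) \;=\; \{f_i^{k}(b_1\otimes b_2): b_1\otimes b_2\in B_{s_iv_{\min}^{\lambda}}(\lambda)\otimes B_w(\infty),\ k\geq 0\}\setminus\{0\}.
\end{equation*}
The inclusion $\supseteq$ follows from the Demazure recursion and $f_i$-stability of $B_w(\infty)$. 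For $\subseteq$, take the $i$-string top $\tilde b_1\otimes\tilde b_2$ in $B(\lambda)\otimes B(\infty)$ of any element on the left side: the Kashiwara rule forces $e_i\tilde b_1 = 0$, hence $\tilde b_1\in B_{s_iv_{\min}^{\lambda}}(\lambda)$ by the Demazure recursion, while $e_i$-stability of $B_w(\infty)$ places $\tilde b_2\in B_w(\infty)$. Applying $f_i$-closure to the inductively-given decomposition and noting that this operation carries each Demazure crystal to a Demazure crystal completes the induction.

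For (3) $\Rightarrow$ (2), I would argue by contrapositive. If $v_{\min}^{\lambda}\notin W_J$, then some simple reflection $s_i$ is a left descent of $v_{\min}^{\lambda}$ with $\ell(s_iw) > \ell(w)$. Using $B_w(\infty)\subsetneq B_{s_iw}(\infty)$ and extremality of $B_w(\infty)$ in $B(\infty)$, I can select $b\in B_w(\infty)$ with $\varepsilon_i(b)=0$, $\varphi_i(b)>0$, and $f_ib\notin B_w(\infty)$. Meanwhile, the left-descent condition together with the minimality of $v_{\min}^{\lambda}$ in its $W_\lambda$-coset yields $\langle v_{\min}^{\lambda}\lambda,\alpha_i^\vee\rangle < 0$, so $\varphi_i(b_{v\lambda}) = 0$ and $\varepsilon_i(b_{v\lambda}) > 0$. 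A direct computation using Kashiwara's tensor rule then shows that the $i$-string through $b_{v\lambda}\otimes b$ in $B(\lambda)\otimes B(\infty)$ has its left-acting portion from top $b_{s_iv_{\min}^{\lambda}\lambda}\otimes b$ down to $b_{v_{\min}^{\lambda}\lambda}\otimes b$ inside $B_v(\lambda)\otimes B_w(\infty)$, while its right-acting portion $\{b_{v_{\min}^{\lambda}\lambda}\otimes f_i^k b\}_{k\geq 1}$ lies outside, contradicting extremality.

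The hardest part I anticipate is (3) $\Rightarrow$ (2): locating the witness $b\in B_w(\infty)$ with the required properties and verifying the sign $\langle v_{\min}^{\lambda}\lambda,\alpha_i^\vee\rangle < 0$ from the minimal-coset description of $v_{\min}^{\lambda}$. The remaining implications reduce to careful application of the Demazure recursion and the Kashiwara tensor rule, once the structural link between $J$ and the $e_i/f_i$-stability of $B_w(\infty)$ is in hand.
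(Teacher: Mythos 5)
Your implications $(1)\Rightarrow(3)$ and $(2)\Rightarrow(1)$ are sound and essentially reproduce the paper's argument: the paper proves the identity $B_{v_{\min}^\lambda}(\lambda)\otimes B_w(\infty)=T_{i_1}\cdots T_{i_k}(b_\lambda\otimes B_w(\infty))$ in one pass (\lemref{lem infty}) and then pushes Joseph's decomposition of $b_\lambda\otimes B_w(\infty)$ through the operators $T_{i_j}$ using \lemref{Ti invariant}; your induction on $\ell(v_{\min}^\lambda)$ is this computation unrolled one letter at a time, and your structural observation that $\ell(s_iw)<\ell(w)$ forces $T_iB_w(\infty)=B_w(\infty)$ is the same ingredient.

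The gap is in $(3)\Rightarrow(2)$. You reduce the contrapositive to the existence of a simple reflection $s_i$ that is simultaneously a left descent of $v_{\min}^\lambda$ and satisfies $\ell(s_iw)>\ell(w)$. This does not follow from $v_{\min}^\lambda\notin\langle J\rangle$: it can happen that every left descent of $v_{\min}^\lambda$ lies in $J$ while $v_{\min}^\lambda\notin\langle J\rangle$. For instance, with $J=\{s_1\}$ and $v_{\min}^\lambda=s_1s_2$ reduced, the only left descent is $s_1\in J$, yet $s_1s_2\notin\langle s_1\rangle$. Consequently your witness $b_{v\lambda}\otimes b$, whose analysis requires $\varphi_i(b_{v\lambda})=0$ and hence that $s_i$ be a left descent of $v_{\min}^\lambda$, is not available in general. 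The paper avoids this by fixing a reduced expression $v_{\min}^\lambda=s_{i_1}\cdots s_{i_k}$ and choosing any position $j$ with $\ell(s_{i_j}w)>\ell(w)$ --- such a letter must occur somewhere in the word, though not necessarily in first position --- and then taking as first tensor factor the extremal weight vector $b_{s_{i_{j+1}}\cdots s_{i_k}\lambda}$ of the suffix. The case analysis in \lemref{i implies ii} shows $\langle s_{i_{j+1}}\cdots s_{i_k}\lambda,\alpha_{i_j}^\vee\rangle>0$, so this vector sits at the top of a nontrivial $i_j$-string contained in $B_{s_{i_j}\cdots s_{i_k}}(\lambda)\subseteq B_v(\lambda)$, and descending the $i_j$-string through $b_{s_{i_{j+1}}\cdots s_{i_k}\lambda}\otimes b$ exits $B_v(\lambda)\otimes B_w(\infty)$ exactly as in your computation. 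Your construction of the witness $b\in B_w(\infty)$ with $e_ib=0$, $f_ib\neq 0$ and $f_ib\notin B_w(\infty)$, via $B_w(\infty)\subsetneq B_{s_iw}(\infty)$ and extremality of $B_w(\infty)$, is correct and is what the paper does; only the reduction that precedes it needs to be replaced.
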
   
\thmref{thm 1} together with \thmref{thm 4} is a complete generalization of the results proved by Anthony Joseph in \cite{Anthony}. We also describe the Demazure crystals that appear in the decomposition of $B_v(\lambda)\otimes B_w(\infty)$ in the following theorem if $B_v(\lambda)\otimes B_w(\infty)$ is isomorphic to a disjoint union of Demazure crystals.
\begin{theorem}
	Let $\lambda \in P^+,\,v,w\in W, $ and $s_{i_1}\cdots s_{i_k}$ be a reduced expression of $v_{\min}^\lambda$ in $W$. Assume that $v_{\min}^{\lambda}\in \langle\{s_i\,|\, l(s_i w)< l(w)\}\rangle$ then for all $b\in B_w(\infty)^\lambda$ there exist $u(b,v)\in W$ such that
	$$B_v(\lambda)\otimes B_w(\infty)\cong \coprod_{b\in B_w(\infty)^{\lambda}}B_{u(b,v)}(\infty;\lambda+wt(b)).$$ 
\end{theorem}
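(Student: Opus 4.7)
The plan is to mirror the proof strategy of \thmref{thm 2} by inducting on $k = l(v_{\min}^\lambda)$. In the base case $k=0$ we have $v_{\min}^\lambda = e$, so $B_v(\lambda) = \{b_\lambda\}$, and the statement reduces precisely to Anthony Joseph's decomposition of $\{b_\lambda\}\otimes B_w(\infty)$ established in \cite{Anthony}; this provides the initial family of Weyl group elements $\{u(b,e)\}_{b\in B_w(\infty)^\lambda}$ together with the isomorphism $\{b_\lambda\}\otimes B_w(\infty)\cong\coprod_{b} B_{u(b,e)}(\infty;\lambda+wt(b))$.

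For the inductive step, let $s_{i_1}s_{i_2}\cdots s_{i_k}$ be a reduced expression of $v_{\min}^\lambda$ and pick $v'\in W$ with $(v')_{\min}^\lambda = s_{i_2}\cdots s_{i_k}$, so that $B_v(\lambda) = F_{i_1} B_{v'}(\lambda)$ by the defining recursion for Demazure crystals. The hypothesis $v_{\min}^\lambda \in \langle\{s_i : l(s_i w) < l(w)\}\rangle$ forces $l(s_{i_1}w) < l(w)$, which guarantees $F_{i_1} B_w(\infty) = B_w(\infty)$. With both tensor factors stable under $F_{i_1}$ in this manner, a direct application of the Kashiwara tensor product rule for $f_{i_1}$ yields
\[
B_v(\lambda)\otimes B_w(\infty) \;=\; F_{i_1}\bigl(B_{v'}(\lambda)\otimes B_w(\infty)\bigr),
\]
so the inductive hypothesis is transported through $F_{i_1}$.

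Applying this to the inductive decomposition $B_{v'}(\lambda)\otimes B_w(\infty)\cong\coprod_{b} B_{u(b,v')}(\infty;\lambda+wt(b))$ leaves us to analyze $F_{i_1}$ on each Demazure summand. The standard Demazure recursion for subcrystals of $B(\infty)$ gives $F_{i_1} B_u(\infty;\nu) = B_{s_{i_1}u}(\infty;\nu)$ when $l(s_{i_1}u) > l(u)$, and $F_{i_1} B_u(\infty;\nu) = B_u(\infty;\nu)$ otherwise; defining $u(b,v)$ to be $s_{i_1}u(b,v')$ or $u(b,v')$ according to the relative lengths produces the required Weyl group elements. Note that the indexing set $B_w(\infty)^\lambda$ is intrinsic---it is defined purely in terms of primitivity of $b_\lambda\otimes b$ in $B(\lambda)\otimes B(\infty)$ and does not depend on $v$---so the same index set continues to work at the new stage.

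The main obstacle will be ensuring that the union over $b\in B_w(\infty)^\lambda$ remains disjoint after applying $F_{i_1}$, since this operator could a priori fuse two distinct Demazure summands. This is where one invokes \thmref{thm 4}: under the running hypothesis $B_v(\lambda)\otimes B_w(\infty)$ is already guaranteed to decompose as a disjoint union of Demazure crystals, and each component contains exactly one primitive of the form $b_\lambda\otimes b$ with $b\in B_w(\infty)^\lambda$. Because $F_{i_1}$ preserves the highest weights $\lambda + wt(b)$ of the summands and preserves the set of primitives, matching components with the primitives they contain yields a bijection between the components and the index set, closing the induction.
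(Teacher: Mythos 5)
Your argument is essentially the paper's proof of \thmref{disjoint union infty}, reorganized as an induction on $l(v_{\min}^\lambda)$: the paper applies $T_{i_1}\cdots T_{i_k}$ to Joseph's decomposition of $b_\lambda\otimes B_w(\infty)$ from \thmref{identity result} in one pass (this is the content of \lemref{lem infty}), whereas you peel off one operator $F_{i_1}=T_{i_1}$ at a time; your identity $B_v(\lambda)\otimes B_w(\infty)=T_{i_1}\bigl(B_{v'}(\lambda)\otimes B_w(\infty)\bigr)$ and the length-dependent recursion $T_{i_1}B_u\mapsto B_{s_{i_1}u}$ or $B_u$ are exactly \lemref{lem infty} and \lemref{Ti invariant}, and your recursive definition of $u(b,v)$ reproduces the paper's $u_1$. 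So the route is the same in substance, and the outline is correct.

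There is, however, one genuine logical flaw: you justify disjointness of the summands by ``invoking \thmref{thm 4},'' but in the paper the implication $(2)\Rightarrow(1)$ of that theorem (\thmref{main result 2}) is itself deduced from the very statement you are proving, so this appeal is circular. The repair is both available in your own text and simpler than what you propose: each summand $T_{y_{w,b}^{\lambda}}(b_\lambda\otimes b)$, and hence its image under any string of $T_{i}$'s, is a connected subset containing $b_\lambda\otimes b$ and therefore lies in the connected component $\mathcal F(b_\lambda\otimes b)$ of the ambient crystal $B(\lambda)\otimes B(\infty)$; distinct primitives $b\in B_w(\infty)^{\lambda}$ label distinct connected components by \remref{decom of lambda and infty}, so the union stays disjoint with no appeal to extremality or to \thmref{main result 2}. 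Two smaller points you use silently and should record: that every letter $s_{i_j}$ of the reduced word satisfies $l(s_{i_j}w)<l(w)$ (true, because reduced words of an element of a standard parabolic subgroup use only its generators --- this is where the hypothesis actually enters), and that $s_{i_2}\cdots s_{i_k}$ is again the minimal representative of its $W_\lambda$-coset so that the inductive hypothesis applies to $v'$ (true, since a suffix of a reduced word of a minimal coset representative is again one).
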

 
 The paper is organized as follows. In \secref{prelims}, we fix the notations and recall some basic definitions used in the paper. In \secref{previous results}, we recall the results proved in \cite{Anthony}. In \secref{decomposition for finite}, we prove our main results and provide some applications. Finally in \secref{decomposition for infinity}, we prove \thmref{main result 2} and explain our results with the help of an example.  

	\section{PRELIMINARIES}\label{prelims}
	Let $\mathfrak{g}$ be a symmetrizable Kac-Moody Lie algebra with Cartan subalgebra $\mathfrak{h}$. Denote $\bu(\mathfrak{g})$ (resp. $\bu_q(\mathfrak{g})$) the corresponding universal enveloping algebra (resp. quantized universal enveloping algebra) of $\mathfrak{g}$. Denote by $I = \{1,2,\cdots n\}$ the indexing set and $\Delta = \{\alpha_1,\cdots, \alpha_n\}$ (resp. $\{\alpha_1^{\vee}, \cdots, \alpha_n^{\vee}\}$) the set of simple roots (resp. simple coroots) of $\mathfrak{g}$. Let $R$  (resp.  $R^+, R^-$) denote the set of roots (resp. positive, negative roots) of $\mathfrak{g}$.
	\subsection{} Denote the Weyl group of $\mathfrak{g}$ by $W$ and let $(\langle \alpha_i, \alpha_j^\vee \rangle)_{i,j \in I}$ be a symmetrizable generalized Cartan matrix. Let $Q$ (resp. $Q^+, Q^-$) and $P$ (resp. $P^+$) denote the root lattice (resp. positive root lattice, negative root lattice) and the weight lattice (resp. dominant weights). Let us denote the elements of $Q^+$ (resp. $Q^-$) by $\alpha>0$ (resp. $\alpha<0$). 
	
	For each $i\in I$, let $s_i$ denote the simple reflection in $W$ given by $$s_i(\beta) = \beta -\langle\beta, \alpha_i^\vee \rangle \alpha_i.$$ Given $w\in W$, let $w=s_{i_1}s_{i_2}\cdots s_{i_k}: i_j \in I$ be a reduced expression of $w$ then $k$ is the length of $w$ and $l(w)=k$ is the length function on $W$. The following lemma recalls a combinatorial result about Weyl group $W$.
	\begin{lemma}\label{length}\cite[Proposition 5.7]{Humphreys}
		For $w\in W$ and $ \alpha_i \in \Delta$, then \begin{enumerate}
			\item $l(ws_{i})>l(w)$ if and only if $w(\alpha_i)>0$.
			\item $l(s_{i}w)>l(w)$ if and only if $w^{-1}(\alpha_i)>0$.
		\end{enumerate} \qed
	\end{lemma}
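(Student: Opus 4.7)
The plan is to prove this classical fact via the standard length-inversion formula
\[
l(w) = |N(w)|, \qquad N(w) := \{\alpha \in R^+ : w(\alpha) \in R^-\},
\]
together with the well-known observation that the simple reflection $s_i$ sends $\alpha_i$ to $-\alpha_i$ and permutes $R^+ \setminus \{\alpha_i\}$. First I would establish (or quote) the length-inversion formula by induction on $l(w)$, using the fact that right multiplication by a simple reflection changes $|N(\,\cdot\,)|$ by exactly one; this is the one genuinely nontrivial ingredient and is precisely the content of Humphreys' Proposition~5.6.

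To prove (1), I would compute $N(ws_i)$ and compare it with $N(w)$ via the substitution $\alpha \mapsto s_i(\alpha)$. Since $s_i$ stabilizes $R^+ \setminus \{\alpha_i\}$ and swaps $\alpha_i$ with $-\alpha_i$, one obtains a bijection
\[
N(ws_i) \setminus \{\alpha_i\} \;\longleftrightarrow\; N(w) \setminus \{\alpha_i\}, \qquad \alpha \longmapsto s_i(\alpha),
\]
while the membership of $\alpha_i$ itself is controlled by the sign of $w(\alpha_i)$: namely $\alpha_i \in N(ws_i)$ iff $ws_i(\alpha_i) = -w(\alpha_i) < 0$, i.e.\ iff $w(\alpha_i) > 0$, whereas $\alpha_i \in N(w)$ iff $w(\alpha_i) < 0$. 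Combining these, $l(ws_i) = l(w)+1$ precisely when $w(\alpha_i) > 0$, and $l(ws_i) = l(w)-1$ otherwise, which yields (1).

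Part (2) then follows from (1) by replacing $w$ with $w^{-1}$: since $l(x) = l(x^{-1})$ for every $x \in W$, the condition $l(s_i w) > l(w)$ is equivalent to $l(w^{-1} s_i) > l(w^{-1})$, which by (1) holds iff $w^{-1}(\alpha_i) > 0$. The main obstacle is the length-inversion formula itself (and the auxiliary fact that $s_i$ permutes $R^+ \setminus \{\alpha_i\}$, which requires the explicit description of positive roots in the symmetrizable Kac-Moody setting); once these are granted, the rest of the argument is a direct calculation. Since both are standard, in the paper it suffices to cite Humphreys' Proposition~5.7.
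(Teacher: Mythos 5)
The paper offers no proof of this lemma, simply citing Humphreys' Proposition~5.7, and your argument is precisely the standard one behind that reference: the length-inversion formula $l(w)=|N(w)|$, the fact that $s_i$ permutes the positive roots other than $\alpha_i$, and the passage to inverses via $l(x)=l(x^{-1})$ for part (2). The argument is correct; the only point worth flagging in the symmetrizable Kac--Moody setting is that $N(w)$ should be taken among the \emph{real} positive roots (the positive imaginary roots are permuted by $W$ and never contribute to inversions), which does not affect your computation since $\alpha_i$ and its $W$-orbit are real.
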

	\subsection{} In this subsection, we recall the definitions of crystals, morphism between two crystals, and character of a crystal.
	
	\begin{definition}
		A \textit{crystal }$B$ is a set together with maps $wt: B\rightarrow P, \, \varphi_i, \varepsilon_i: B \rightarrow \mathbb Z \, \cup \{-\infty\},$ and $e_i, f_i: B\rightarrow B\, \cup\, \{0\}, \forall i\in I $ satisfying the following properties.
	\begin{description}
		\item[C1] For all $b \in B$ and $ i \in I$, $\varphi_i(b)  = \varepsilon_i(b)+ \langle wt(b), \alpha_i^{\vee}\rangle$,
		\item[C2]  For all $b,b^{'} \in B$ and $ i \in I$, $e_i(b) = b^{'} \Longleftrightarrow b= f_i b^{'}$,
		\item[C3] For all $b, e_i(b)\in B$ and $i\in I$, $wt(e_i(b)) = wt(b)+\alpha_i, \quad \varepsilon_i(e_i (b)) = \varepsilon_i(b)-1$,
		\item[C4] For all $b\in B$ and $i\in I$ with $\varphi_i(b) = -\infty$, $e_i(b) = f_i(b) =0$.
	\end{description}
			
	\end{definition}
	A crystal is said to be upper (resp. lower) normal if for all $b \in B$ and $ i\in I$, we have $\varepsilon_i(b) = \max \{k\,|\, e_i^k(b)\neq 0\}$ (resp. $\varphi_i(b) = \max \{k\,|\, f_i^k(b)\neq 0\}$). A \textit{normal crystal} is both upper and lower normal. We are mainly interested in upper normal and normal crystals. 
	
   Let $B_1$ and $B_2$ be crystals, a morphism $\psi$ is a map $\psi : B_1 \coprod\{0\} \rightarrow B_2 \coprod \{0\}$ that satisfies the following conditions.
   \begin{enumerate}
   	\item $\psi (0) =0$.
   	\item For all $i\in I$, $b\in B_1$ with $\psi(b)\in B_2$, we have $$wt(\psi(b)) = wt(b), \quad \varepsilon_i(\psi(b)) = \varepsilon_i(b),\quad \varphi_i(\psi(b)) = \varphi_i(b).$$
   	\item For all $i\in I$, $b\in B_1$ with $\psi(b)\in B_2,\, \psi(e_i(b))\in B_2$ we have $\psi(e_i(b)) = e_i(\psi(b)).$
   	\item For all $i\in I$, $b\in B_1$ with $ \psi(b)\in B_2, \, \psi(f_i(b))\in B_2$ we have $\psi(f_i(b)) = f_i(\psi(b)).$
   \end{enumerate} 
	A morphism $\psi$ is said to be strict if it commutes with both $e_i$ and $f_i$ for all $i\in I$. If $\psi$ is an injective morphism then $\psi $ is called an embedding.
	
	For a crystal $B$ and $\omega \in P$, define a set $B_\omega = \{b\in B|\, wt(b) = \omega\}$ and assume that $|B_\omega| < \infty$. Define the formal character $ch(B)$ by $$ch(B) = \sum_{\omega \in P}|B_{\omega}| e^\omega.$$   
	
	\subsection{} Let us recall the tensor product of two crystals. Let $B_1$ and $B_2$ be crystals and the set $B_1 \times B_2$ has a crystal structure denoted by $B_1 \otimes B_2$ defined in \cite{K1demazure}.
	\begin{definition}
	For $i\in I$ and $b_1 \otimes b_2 \in B_1 \otimes B_2,$ where $b_1 \in B_1$ and $b_2 \in B_2$, define the maps $wt, e_i, f_i$ as follows $$\begin{array}{ll}
			wt(b_1 \otimes b_2) = wt(b_1) +wt(b_2);  \\
			e_i(b_1 \otimes b_2) = \begin{cases}
				e_i(b_1)\otimes b_2 \quad \text{if }\varphi_i(b_1)\geq \varepsilon_i(b_2),\\
				b_1 \otimes e_i(b_2) \quad \text{if } \varphi_i(b_1)<\varepsilon_i(b_2),
			\end{cases} \\     
			f_i(b_1 \otimes b_2) = \begin{cases}
				f_i(b_1)\otimes b_2 \quad \text{if }\varphi_i(b_1)>\varepsilon_i(b_2),\\
				b_1 \otimes f_i(b_2) \quad \text{if } \varphi_i(b_1)\leq \varepsilon_i(b_2).
			\end{cases}
		\end{array} $$
	\end{definition}
	Then for each $i\in I$, it follows that $$\begin{array}{ll}
		\varepsilon_i(b_1 \otimes b_2) = \varepsilon_i(b_1)+\max\{0,\varepsilon_i(b_2)-\varphi_i(b_1)\},  \\
		\varphi_i(b_1 \otimes b_2) = \varphi_i(b_2)+\max\{0,\varphi_i(b_1)-\varepsilon_i(b_2)\},  \\
	\end{array}$$
	\subsection{} Now we recall the definition of Demazure crystals and some well known results proved in \cite{K1demazure, L2, Anthony}. Let $ B$ be a crystal. For $i \in I$ and $S\subseteq B$, define
	  $$T_iS = \{f_i^kb\, | \,b\in S, k\geq 0\} \backslash \{0\}\subseteq B.$$ For $\lambda \in P^+$, let $V(\lambda)$ be the irreducible highest weight $\bu_q(\mathfrak{g})$-module generated by $v_{\lambda}$ with highest weight $\lambda$. Let $B(\lambda)$ denotes its crystal basis with highest weight $b_{\lambda}$. Similarly, denote by $B(\infty)$, the crystal associated with $\bu_q^- (\mathfrak{g})$. The crystal $B(\infty)$ is a an upper normal highest weight crystal generated by $b_{\infty}$ of highest weight $0$. 
	  \begin{remark}\label{fib is nonzero}
	  	For all $i\in I$ and $b\in B(\infty)$ we have $f_i(b) \in B(\infty)$, that is $f_i(b)\neq 0$.
	  \end{remark}

	 For $w\in W$ with a reduced expression $ s_{i_1}\cdots s_{i_k}$ and $S\subseteq B$, define $$T_w S = T_{i_1}\cdots T_{i_k} S$$ and we know from \cite{K1demazure, L2} that the following subsets $$B_w(\lambda) = T_{w}\{b_{\lambda}\}, \quad B_w(\infty) = T_{w}\{b_{\infty}\}$$ of $B(\lambda)$ and $B(\infty)$ respectively are independent of the choice of a reduced expression of $w$.
	\begin{definition}
		For $\lambda \in P^+ $ and $w\in W$, the subset $B_w(\lambda)$ of $B(\lambda)$ is called the \textit{Demazure crystal}.
	\end{definition}
	Similarly, the subset $B_w(\infty)$ of $B(\infty)$ is called the limiting \textit{Demazure crystal}. For $w=e \in W$ and $\lambda \in P^+$, we have $B_e(\lambda) = \{b_\lambda\}$.
	Define an element $b_{w\lambda} \in B_w(\lambda)\subseteq B(\lambda)$ of weight $w\lambda$ by \begin{equation}\label{defn of bwlambda}
		b_{w\lambda} = f_{i_1}^{\langle s_{i_2}\cdots s_{i_n}\lambda, \alpha_{i_1}^\vee\rangle} \cdots f_{i_n}^{\langle\lambda, \alpha_{i_n}^\vee\rangle} b_\lambda
	\end{equation}
	The Demazure crystal $B_w(\lambda)$ is not necessarily stable under $f_i$ where $i\in I$ but it is stable under $e_i$ i.e. $e_i(B_w(\lambda))\subset B_w(\lambda)\cup \{0\}, \forall i \in I$. Therefore, a Demazure crystal is not necessarily a crystal. Also, note that the cardinality of $B_w(\lambda)$ is finite and $B_w(\infty)$ is infinite. 
	\begin{remark}\cite[Proposition 3.2.4]{K1demazure}\label{demazure subset}
		For $\lambda \in P^{+}\cup \{\infty\}$ and $w,w^{'} \in W$ such that $w\leq w^{'}$ in Bruhat order then $B_w(\lambda)\subseteq B_{w^{'}}(\lambda)$.
	\end{remark}
	\subsection{} Let us introduce the notion of extremal subsets. For $i\in I,\, \lambda \in P^+ \cup \{\infty\}$ and $b\in B(\lambda)$ with $e_i(b) =0$, the \textit{$i$-string} for $b$ is the set $S = \{f_i^k(b);  k \geq 0\}\backslash \{0\}$, where $b$ is called the highest weight vector of $S$. 
	
	In \cite{Assaf}, the notion of extremal subsets is defined for subsets of $B(\lambda)$. Now we extend the notion of extremal subsets for subsets of $B(\infty)$ and provide the following definition.
	\begin{definition}
		For $\lambda \in P^+ \cup \{\infty\}$ a non-empty subset $X$ of $B(\lambda)$ is said to be extremal if for any $i$-string $S$ of $B(\lambda)$ with highest weight vector $b$, we have $S \, \cap X$ is either $\emptyset, S,$ or $\{b\}$.
	\end{definition}
	Similarly, it is easy to define the notion of extremal subsets of $B(\lambda)\otimes B(\mu)$ where $\lambda \in P^+$ and $\mu \in P^+ \cup \{\infty\}$.
	\begin{remark}\label{union are extremal}
		It is easy to see that unions of extremal subsets are extremal.
	\end{remark}
	\begin{remark}\label{extremal subset}
		Using \cite[Proposition 3.3.5]{K1demazure}, we see that Demazure crystals are extremal subsets but not all extremal subsets are Demazure crystals.
	\end{remark}
	Now we recall the following result proved in \cite[Proposition 3.2.3]{K1demazure}.
	\begin{lemma}\label{Ti invariant}
		For $\lambda\in P^+ \cup \{\infty\}, i\in I$ and $w\in W$, we have $$T_iB_w(\lambda) = \begin{cases}
			B_w(\lambda)  & \text{if } l(s_iw)<l(w),\\
			B_{s_iw}(\lambda) & \text{if }l(s_iw)>l(w).
		\end{cases}$$ \qed
	\end{lemma}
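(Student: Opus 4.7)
The plan is to argue from the key structural fact (already recalled in the excerpt) that $B_w(\lambda) = T_w\{b_\lambda\} = T_{i_1}\cdots T_{i_k}\{b_\lambda\}$ is independent of the reduced expression chosen for $w$. Split into the two cases according to whether $s_i$ can be prepended to or extracted from a reduced word for $w$.

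First I would treat the case $l(s_iw)>l(w)$. Fix a reduced expression $w=s_{i_1}\cdots s_{i_k}$. Since $l(s_iw)=k+1$, the word $s_i s_{i_1}\cdots s_{i_k}$ is automatically a reduced expression of $s_iw$. Applying the definition $B_{s_iw}(\lambda)=T_{s_iw}\{b_\lambda\}$, we immediately get
$$T_i B_w(\lambda)=T_i T_{i_1}\cdots T_{i_k}\{b_\lambda\}=T_{s_iw}\{b_\lambda\}=B_{s_iw}(\lambda),$$
which handles the second branch.

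Next I would treat the case $l(s_iw)<l(w)$. By \lemref{length}, this says $w^{-1}(\alpha_i)<0$, and the standard exchange property for Coxeter groups then guarantees the existence of a reduced expression of $w$ that begins with $s_i$, say $w=s_i s_{j_1}\cdots s_{j_{k-1}}$ with $s_{j_1}\cdots s_{j_{k-1}}$ a reduced word for $s_iw$. Using again the independence on the reduced expression, we can write
$$B_w(\lambda)=T_i T_{j_1}\cdots T_{j_{k-1}}\{b_\lambda\}=T_i B_{s_iw}(\lambda).$$
Now the proof reduces to the idempotence identity $T_i\circ T_i = T_i$ for any subset of an upper normal crystal: unfolding the definition, $T_i(T_iS)=\{f_i^{k+\ell}b\mid b\in S,\ k,\ell\ge 0\}\setminus\{0\}=T_iS$. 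Combining these two observations yields $T_i B_w(\lambda)=T_i(T_i B_{s_iw}(\lambda))=T_i B_{s_iw}(\lambda)=B_w(\lambda)$, which is the first branch.

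The only step I would flag as requiring care is the justification that $B_w(\lambda)$ does not depend on the reduced expression used to compute it; this however is supplied as an input from \cite{K1demazure, L2} in the preliminaries, so in the present context the argument amounts to assembling the two elementary moves above. The exchange-property assertion (existence of a reduced word for $w$ beginning with $s_i$ when $l(s_iw)<l(w)$) is purely a Coxeter-theoretic fact and can be invoked without further ado.
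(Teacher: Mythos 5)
Your proof is correct. Note that the paper itself gives no argument for this lemma --- it is quoted directly from \cite[Proposition 3.2.3]{K1demazure} with an immediate \verb|\qed| --- so there is no internal proof to compare against; what you have written is a clean derivation from the one nontrivial input the paper does record in its preliminaries, namely that $B_w(\lambda)=T_w\{b_\lambda\}$ is independent of the chosen reduced expression. Both branches check out: when $l(s_iw)>l(w)$ the concatenated word $s_i s_{i_1}\cdots s_{i_k}$ is automatically reduced and the second case is immediate; when $l(s_iw)<l(w)$, a reduced word for $w$ beginning with $s_i$ exists (indeed, prepending $s_i$ to any reduced word for $s_iw$ already produces one of the correct length, so you do not even need to invoke the exchange property), and the idempotence $T_i\circ T_i=T_i$ --- which, as your computation shows, holds for arbitrary subsets of arbitrary crystals, so the ``upper normal'' hypothesis you mention is not actually needed --- finishes the first case. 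The only caveat is the one you already flag: the reduced-expression independence is essentially the substance of the cited Kashiwara proposition, so your argument should be read as reducing the lemma to that fact rather than as a from-scratch proof; within the logical framework the paper sets up, that is exactly the right level of detail.
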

	For $\lambda\in P^+$, define $W_{\lambda}= \{w\in W\,| \,w\lambda = \lambda\} = \langle\{s_i \,|\,s_i\lambda=\lambda, i\in I\}\rangle$, the stabilizer of $\lambda$ in $W$. 
	For $w\in W$, it follows from \cite{Humphreys} that there exist a unique element of smallest length in the coset $wW_{\lambda}$ and denote this minimal length representative by $w_{\min}^{\lambda}$.
	\begin{remark}\label{maximal length}
		In the case of symmetrizable Kac-Moody Lie algebras, it is not necessary that the maximal length element exist in the coset $wW_{\lambda}$. Although, it is true in the case of finite-dimensional Lie algebras.
	\end{remark}
	\section{Tensor decomposition of $B_e(\lambda)\otimes B_w(\mu)$}\label{previous results}
	In this section, we recall the results proved in \cite{K1demazure, L2, Anthony} for the decomposition of tensor product of highest weight crystals $B(\lambda)\otimes B(\mu)$ and Demazure crystals $B_e(\lambda)\otimes B_w(\mu)$.
	\subsection{} First we introduce the notion of primitive element given in \cite{Anthony}. Let $B$ be a crystal, an element $b\in B$ is said to be primitive if $e_i(b) =0,$ for all $i\in I$. Moreover, every primitive element of $B(\lambda)\otimes B(\mu)$ is of the form $b_{\lambda}\otimes b$ for some $b\in B(\mu)$.  Denote by $B(\mu)^{\lambda}$, the set of all $b\in B(\mu)$ such that $b_{\lambda}\otimes b$ is primitive in  $B(\lambda) \otimes B(\mu).$ From \cite{L1, L2}, it follows that for $\lambda, \mu \in P^+$, 
	 \begin{equation}\label{decom of highest}
		B(\lambda)\otimes B(\mu)= \coprod_{b\in B(\mu)^{\lambda}} \mathcal{F}(b_\lambda \otimes b)
	\end{equation}
	where $\mathcal{F}(b_\lambda \otimes b)$ denotes the connected component of $B(\lambda)\otimes B(\mu)$ containing $b_\lambda \otimes b$ and it obtains the crystal structure by restricting that of $B(\lambda)\otimes B(\mu)$. Moreover, as crystals
	\begin{equation}\label{F iso to highest}
		\mathcal{F}(b_\lambda \otimes b) \cong B(\lambda+wt(b)).
	\end{equation}
\begin{remark}\label{decom of lambda and infty}
	Taking $\mu \rightarrow \infty$, \eqref{decom of highest} also holds for $B(\lambda)\otimes B(\infty)$ and it is a disjoint union of highest weight crystals isomorphic to 
	$B(\infty)$, a crystal generated by $b_{\infty}$ of highest weight $\lambda+wt(b)$, where $b \in B(\infty)^\lambda$. 
\end{remark} 
For $b\in B(\infty)^{\lambda}$, denote this copy of $B(\infty)$ by $B(\infty; \lambda+wt(b))$. Using \eqref{decom of highest}, we have the following result. 
	\begin{theorem}\cite[Theorem 2.11]{Anthony}\label{identity result}
		For $\lambda \in P^+,\, \mu \in P^+\cup \{\infty\},$ and $w\in W$, then $B_e(\lambda)\otimes B_w(\mu)$ is isomorphic to a disjoint union of Demazure crystals. 
		Moreover, for all $b\in B(\mu)^{\lambda}\cap B_w(\mu)$ there exist $y_{w,b}^{\lambda}\in W$ such that  $$B_e(\lambda)\otimes B_w(\mu) = \coprod_{b\in B(\mu)^{\lambda}\cap B_w(\mu)}T_{y_{w,b}^{\lambda}}(b_{\lambda}\otimes b)$$ \qed
	\end{theorem}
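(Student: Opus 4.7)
I would prove this by induction on $l(w)$. For the base case $w=e$, we have $B_e(\lambda)\otimes B_e(\mu) = \{b_\lambda\otimes b_\mu\}$; since $e_j$ annihilates both $b_\lambda$ and $b_\mu$, the element $b_\lambda\otimes b_\mu$ is primitive, so $b_\mu\in B(\mu)^\lambda\cap B_e(\mu)$ and the singleton equals $T_e(b_\lambda\otimes b_\mu)$, confirming the statement with $y_{e,b_\mu}^\lambda=e$.

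For the inductive step, write $w=s_iw'$ with $l(w')=l(w)-1$. By \lemref{Ti invariant}, $B_w(\mu)=T_iB_{w'}(\mu)$, hence $B_e(\lambda)\otimes B_w(\mu) = \{b_\lambda\}\otimes T_iB_{w'}(\mu)$. Since $b_\lambda$ is highest weight, raising operators act on $b_\lambda\otimes c$ by $e_j(b_\lambda\otimes c)\in\{0,\,b_\lambda\otimes e_j(c)\}$, and combined with the $e_j$-stability of $B_w(\mu)$, iterated raising from any $b_\lambda\otimes c$ with $c\in B_w(\mu)$ terminates at a primitive $b_\lambda\otimes b$ with $b\in B(\mu)^\lambda\cap B_w(\mu)$. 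Together with \eqref{decom of highest}, this yields
$$\{b_\lambda\}\otimes B_w(\mu) = \coprod_{b\in B(\mu)^\lambda\cap B_w(\mu)} X_b, \qquad X_b := (\{b_\lambda\}\otimes B_w(\mu))\cap\mathcal{F}(b_\lambda\otimes b),$$
and it remains to identify each $X_b$ as a Demazure crystal inside $\mathcal{F}(b_\lambda\otimes b)\cong B(\lambda+wt(b))$.

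To this end I would transfer $X_b$ to the component via \eqref{F iso to highest} and argue case-by-case. If $b\in B_{w'}(\mu)$, the inductive hypothesis gives $X_b^{(w')}:=(\{b_\lambda\}\otimes B_{w'}(\mu))\cap\mathcal{F}(b_\lambda\otimes b) = T_{y_{w',b}^\lambda}(b_\lambda\otimes b)$; I would then prove the key identity $X_b = T_iX_b^{(w')}$ inside the component, which via \lemref{Ti invariant} (applied to $B(\lambda+wt(b))$) equals either $T_{s_iy_{w',b}^\lambda}(b_\lambda\otimes b)$ or $T_{y_{w',b}^\lambda}(b_\lambda\otimes b)$, producing $y_{w,b}^\lambda$. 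If instead $b\in B_w(\mu)\setminus B_{w'}(\mu)$ is a new primitive element, I would show $X_b$ is a single $i$-string $T_i\{b_\lambda\otimes b\}$, corresponding to $y_{w,b}^\lambda=s_i$.

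The main obstacle will be the key identity $X_b = T_iX_b^{(w')}$. The subtlety is that in $B(\lambda)\otimes B(\mu)$ the set $T_i(\{b_\lambda\}\otimes B_{w'}(\mu))$ properly contains $\{b_\lambda\}\otimes T_iB_{w'}(\mu)$, since it includes elements of the form $f_i^k(b_\lambda)\otimes c$ which leave $\{b_\lambda\}\otimes B(\mu)$ whenever $\langle\lambda,\alpha_i^\vee\rangle>0$. The content of the restricted identity is that these stray elements do not land in the portion of $\mathcal{F}(b_\lambda\otimes b)$ relevant to $X_b$, because the inequality $\varepsilon_i(b)\leq\langle\lambda,\alpha_i^\vee\rangle$ forced by primitivity of $b_\lambda\otimes b$ precisely governs the $i$-string truncation inside the component. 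The case $\mu=\infty$ then follows by the same induction, using that $B(\infty)$ is upper normal and that $f_j$ is never zero on $B(\infty)$ (\remref{fib is nonzero}).
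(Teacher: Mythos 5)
The paper does not prove this statement; it is quoted verbatim from \cite{Anthony}, so there is no internal proof to compare against. Judged on its own terms, your induction has a genuine gap: the key identity $X_b = T_iX_b^{(w')}$ is false. Take $\mathfrak{g}=\mathfrak{sl}_2$, $\lambda=\mu=\omega_1$, $w=s_1$, $w'=e$, $b=b_\mu$. Then $X_b^{(e)}=\{b_\lambda\otimes b_\mu\}$, and $T_1X_b^{(e)}=\{\,b_\lambda\otimes b_\mu,\ f_1b_\lambda\otimes b_\mu,\ f_1b_\lambda\otimes f_1b_\mu\,\}$ is the entire component $\mathcal{F}(b_\lambda\otimes b_\mu)\cong B(2\omega_1)$, because $\varphi_1(b_\lambda)=1>0=\varepsilon_1(b_\mu)$ forces $f_1$ onto the first tensor factor. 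But $X_b=(\{b_\lambda\}\otimes B_{s_1}(\mu))\cap\mathcal{F}(b_\lambda\otimes b_\mu)$ is still the singleton $\{b_\lambda\otimes b_\mu\}$: the new element $b_\lambda\otimes f_1b_\mu$ of $\{b_\lambda\}\otimes B_{s_1}(\mu)$ is itself primitive and lies in the \emph{other} component $\mathcal{F}(b_\lambda\otimes f_1b_\mu)\cong B(0)$. So $X_b\subsetneq T_iX_b^{(w')}$, and your scheme would wrongly output $B_{s_1}(2\omega_1)$ for that component. The same example refutes your side claim that $T_i(\{b_\lambda\}\otimes B_{w'}(\mu))$ contains $\{b_\lambda\}\otimes T_iB_{w'}(\mu)$ (the two sets are incomparable: $b_\lambda\otimes f_1b_\mu$ lies in the second but not the first), and your heuristic does not rescue the identity, since here $\varepsilon_1(b_\mu)=0\leq 1=\langle\lambda,\alpha_1^\vee\rangle$ and the identity still fails.

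The phenomenon you are missing is that passing from $B_{w'}(\mu)$ to $T_iB_{w'}(\mu)$ need not enlarge the old components at all: whenever $c$ is the lowest element of $B_{w'}(\mu)$ on its $i$-string and $\varphi_i(b_\lambda)>\varepsilon_i(c)$, the element $b_\lambda\otimes f_i^k(c)$ is \emph{not} obtained from $b_\lambda\otimes c$ by applying $f_i$ (those applications move the first factor instead); it either starts a fresh primitive component or attaches elsewhere, and deciding which requires comparing $\varepsilon_i(f_i^kc)$ with $\varphi_i(b_\lambda)$ and checking primitivity against all $e_j$. This dichotomy is precisely the content of Joseph's argument, and any correct induction on $l(w)$ must carry an inductive hypothesis strong enough to track where each new element $f_i^k(c)\in B_w(\mu)\setminus B_{w'}(\mu)$ lands, rather than the single formula $X_b=T_iX_b^{(w')}$. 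Your base case, the reduction to components via \eqref{decom of highest}, and the treatment of genuinely new primitive vectors are fine, but the inductive step as stated does not go through.
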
 
In \thmref{identity result}, we have from \cite{Anthony} that $B_w(\mu)^{\lambda} = B(\mu)^{\lambda}\cap B_w(\mu)$ is a finite set and $$T_{y_{w,b}^\lambda} (b_{\lambda}\otimes b)\cong \begin{cases}
	B_{y_{w,b}^\lambda}(\lambda+wt(b)), \text{ if } \mu <\infty,\\
	B_{y_{w,b}^\lambda}(\infty;\lambda+wt(b)), \text{ if } \mu =\infty.
\end{cases} $$	
In \cite{Anthony}, the criterion to compute all $b\in B_w(\mu)^\lambda$ and the corresponding $y_{w,b}^\lambda \in W$ is given.
  \subsection{} Now we define the character of the Demazure crystal $B_w(\mu)$ and state some useful applications of \thmref{identity result}. For $i\in I$, define a linear operator $\Delta_i$ on $\mathbb{Z}P$ by 
$$\Delta_i e^\mu = \frac{e^\mu - e^{s_i \mu -\alpha_i}}{1-e^{-\alpha_i}}.$$  
The operator $\Delta_i$ is called the \textbf{Demazure operator} associated with $i$. Also, we have $\Delta_i^2 = \Delta_i,\, \forall i \in I$. Given a reduced decomposition $s_{i_1}\cdots s_{i_k}$ of $w$ in $W$ and $\mu \in P^+$, Set $\Delta_w = \Delta_{i_1}\cdots \Delta_{i_k}$ then it follows from \cite{K1demazure, L2} that $$\ch B_w(\mu) = \Delta_w e^\mu.$$ Also, $D_w$ is independent of the choice of a reduced decomposition of $w$. For $\lambda, \mu \in P^+$ and $w\in W$, we have $$\ch (B_e(\lambda)\otimes B_w(\mu)) = e^\lambda (\Delta_w e^\mu).$$  Using \thmref{identity result}, we obtain the following corollaries.
\begin{corollary}\label{cor 1}
	For $\lambda, \mu \in P^+$ and $w\in W$, then $e^\lambda(\Delta_w e^\mu)$ is a linear combination of some Demazure characters with nonnegative integer coefficients.
\end{corollary}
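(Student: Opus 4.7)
The proof will be an essentially immediate consequence of \thmref{identity result}, obtained by passing to characters on both sides of the decomposition. The plan is to exhibit $e^\lambda(\Delta_w e^\mu)$ explicitly as a finite sum of Demazure characters, each appearing with coefficient exactly $1$.

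First I would recall that for any crystal $B$ the character is additive on disjoint unions and multiplicative on tensor products, so that
\[
\ch\bigl(B_e(\lambda)\otimes B_w(\mu)\bigr) \;=\; \ch B_e(\lambda)\cdot \ch B_w(\mu) \;=\; e^\lambda\cdot(\Delta_w e^\mu),
\]
using $B_e(\lambda)=\{b_\lambda\}$ and the Demazure character formula $\ch B_w(\mu) = \Delta_w e^\mu$ recalled just before the corollary. This handles the left-hand side of the identity we want to build.

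Next I would apply \thmref{identity result} in the case $\mu\in P^+$: it yields
\[
B_e(\lambda)\otimes B_w(\mu) \;\cong\; \coprod_{b\in B_w(\mu)^{\lambda}} B_{y_{w,b}^{\lambda}}\bigl(\lambda+wt(b)\bigr),
\]
together with the key finiteness assertion that $B_w(\mu)^{\lambda} = B(\mu)^{\lambda}\cap B_w(\mu)$ is a finite set. Taking characters of both sides and again applying the Demazure character formula to each summand gives
\[
e^\lambda(\Delta_w e^\mu) \;=\; \sum_{b\in B_w(\mu)^{\lambda}} \Delta_{y_{w,b}^{\lambda}} e^{\lambda+wt(b)},
\]
which expresses the left-hand side as a \emph{finite} sum of Demazure characters, each appearing with coefficient $1\in \Z_{\geq 0}$. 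This is the required statement.

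There is essentially no obstacle here beyond correctly invoking \thmref{identity result}; the only point worth flagging is the need to confirm the finiteness of the indexing set $B_w(\mu)^\lambda$ so that the resulting expression is a well-defined element of $\Z[P]$ rather than a formal infinite sum, but this is supplied directly by the statement of \thmref{identity result}.
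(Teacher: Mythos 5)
Your proof is correct and follows exactly the route the paper intends: take characters of both sides of the decomposition in \thmref{identity result}, using $\ch B_e(\lambda)=e^\lambda$, the Demazure character formula, and the finiteness of $B_w(\mu)^\lambda$. Nothing further is needed.
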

\begin{corollary}\label{cor 2}
	For $\lambda, \mu \in P^+$, $w, y\in W$, we have $$\ch T_y (B_e(\lambda)\otimes B_w(\mu)) = \Delta_y(e^\lambda(\Delta_w e^\mu)).$$
\end{corollary}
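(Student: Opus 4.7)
The strategy is to leverage the decomposition of $B_e(\lambda) \otimes B_w(\mu)$ provided by \thmref{identity result}, apply the operator $T_y$ piecewise to each Demazure summand, and then pass to characters via the Demazure-operator formalism. Starting from
\[
B_e(\lambda) \otimes B_w(\mu) = \coprod_{b \in B_w(\mu)^{\lambda}} T_{y_{w,b}^\lambda}(b_\lambda \otimes b),
\]
I would first observe that $T_y$ respects this disjoint union: each piece $T_{y_{w,b}^\lambda}(b_\lambda \otimes b)$ sits inside the connected component $\mathcal{F}(b_\lambda \otimes b)$ of $B(\lambda) \otimes B(\mu)$, and distinct primitive elements yield distinct connected components by \eqref{decom of highest}. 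Since $T_y$ only iterates the operators $f_i$, the sets $T_y T_{y_{w,b}^\lambda}(b_\lambda \otimes b)$ stay in their respective components and hence remain pairwise disjoint after applying $T_y$.

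Next, I would establish the key identity $T_y B_u(\nu) \cong B_{y*u}(\nu)$, where $y*u$ denotes the Demazure (or $0$-Hecke) product in $W$ defined by $s_i * u = s_i u$ if $l(s_i u) > l(u)$ and $s_i * u = u$ otherwise. This follows by induction on $l(y)$ from \lemref{Ti invariant}: each factor in a reduced expression of $y$ either fixes the current Demazure crystal or replaces its index by the corresponding $s_j$-translate, which is precisely the recursive definition of $*$. Transporting along the crystal isomorphism $\mathcal{F}(b_\lambda \otimes b) \cong B(\lambda+wt(b))$ from \eqref{F iso to highest}, which identifies $T_{y_{w,b}^\lambda}(b_\lambda \otimes b)$ with $B_{y_{w,b}^\lambda}(\lambda + wt(b))$, this gives
\[
T_y T_{y_{w,b}^\lambda}(b_\lambda \otimes b) \cong B_{y * y_{w,b}^\lambda}(\lambda + wt(b)).
\]

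To finish, I would take characters and use the identity $\ch B_{y*u}(\nu) = \Delta_{y*u} e^\nu = \Delta_y \Delta_u e^\nu$ for Demazure operators, which follows from $\Delta_i^2 = \Delta_i$ together with the braid relations. Summing over $b \in B_w(\mu)^\lambda$ and factoring out $\Delta_y$ (a linear operator on $\Z P$) yields
\[
\ch T_y\bigl(B_e(\lambda) \otimes B_w(\mu)\bigr) = \Delta_y \sum_{b} \ch B_{y_{w,b}^\lambda}(\lambda + wt(b)) = \Delta_y \, \ch\bigl(B_e(\lambda) \otimes B_w(\mu)\bigr),
\]
and the last expression equals $\Delta_y(e^\lambda \Delta_w e^\mu)$ by the character formula $\ch(B_e(\lambda) \otimes B_w(\mu)) = e^\lambda \Delta_w e^\mu$ recalled just before the corollary. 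The entire argument is bookkeeping once \thmref{identity result} is in hand; the only delicate step is the inductive proof of $T_y B_u(\nu) \cong B_{y*u}(\nu)$, but this is a routine iteration of \lemref{Ti invariant} and should not pose a substantial obstacle.
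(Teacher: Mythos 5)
Your argument is correct and is essentially the proof the paper intends: the corollary is stated as an immediate consequence of \thmref{identity result}, obtained exactly as you do by applying $T_y$ to each Demazure summand, identifying $T_yB_u(\nu)$ with $B_{y*u}(\nu)$ via \lemref{Ti invariant}, and passing to characters using $\Delta_{y*u}=\Delta_y\Delta_u$. Your Demazure-product recursion $y*u$ is precisely the paper's inductive construction of the elements $u_j$ in \subsecref{uj subsection}, so there is no substantive difference in approach.
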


	\section{Tensor decomposition of $B_v(\lambda)\otimes B_w(\mu)$}\label{decomposition for finite}
	In this section, we prove results for the decomposition of the tensor product of Demazure crystals $B_v(\lambda)\otimes B_w(\mu)$ which are generalization of \thmref{identity result}. Our results are also the generalization of the results proved in \cite{Assaf,Knouo} for finite-dimensional Lie algebras to symmetrizable Kac-Moody Lie algebras. 
	\subsection{} We provide necessary and sufficient conditions for the decomposition of the tensor product of Demazure crystals $B_v(\lambda)\otimes B_w(\mu)$ in the following theorem.
	\begin{theorem}\label{main thm}
		For $\lambda,\mu \in P^+$ and $v,w\in W$, the following statements are equivalent.
		\begin{enumerate}
			\item $B_v(\lambda)\otimes B_w(\mu)$ is isomorphic to a disjoint union of Demazure crystals.
			\item $v_{\min}^{\lambda}\in \langle\{s_i\,|\, \langle w\mu,\alpha_i^\vee\rangle \leq 0\}\rangle$.
			\item $B_v(\lambda)\otimes B_w(\mu)$ is an extremal subset of $B(\lambda)\otimes B(\mu)$.
		\end{enumerate}
	\end{theorem}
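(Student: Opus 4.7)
The plan is to establish the cyclic chain $(1)\Rightarrow(3)\Rightarrow(2)\Rightarrow(1)$. Throughout, I observe that $B_v(\lambda)=B_{v_{\min}^\lambda}(\lambda)$ depends only on the coset $vW_\lambda$, so without loss of generality $v=v_{\min}^\lambda$. The implication $(1)\Rightarrow(3)$ is immediate from \remref{extremal subset} (every Demazure crystal is extremal) together with \remref{union are extremal} (disjoint unions of extremal subsets are extremal), applied to a putative decomposition of $B_v(\lambda)\otimes B_w(\mu)$ into Demazure crystals.

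For $(2)\Rightarrow(1)$, I would fix a reduced expression $v_{\min}^\lambda=s_{i_1}\cdots s_{i_\ell}$; since $v_{\min}^\lambda$ lies in the parabolic-type subgroup $H:=\langle\{s_j:\langle w\mu,\alpha_j^\vee\rangle\le 0\}\rangle$ generated by a subset of the simple reflections, the letters $s_{i_1},\dots,s_{i_\ell}$ all belong to $H$. Starting from \thmref{identity result}, which expresses $B_e(\lambda)\otimes B_w(\mu)$ as a disjoint union of Demazure crystals $\coprod_b T_{y_b}(b_\lambda\otimes b)$ sitting in distinct connected components $\mathcal{F}(b_\lambda\otimes b)\cong B(\lambda+wt(b))$, I would apply the tensor-product operators $T_{i_1},\dots,T_{i_\ell}$ one at a time. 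At each step, applying $T_{i_t}$ keeps each summand inside its own connected component, so \lemref{Ti invariant} identifies the image of each summand as another Demazure crystal, and disjointness is preserved because the ambient connected components remain disjoint. The key technical ingredient is to identify the final iterated image with $B_v(\lambda)\otimes B_w(\mu)$; this relies on the sign hypothesis $\langle w\mu,\alpha_{i_t}^\vee\rangle\le 0$ to guarantee that the tensor-product $f_{i_t}$-rule never forces $f_{i_t}$ onto the right factor in a way that would leave $B_w(\mu)$.

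The main obstacle is $(3)\Rightarrow(2)$, which I would argue by contrapositive. Assuming $v_{\min}^\lambda\notin H$, fix a reduced expression $v_{\min}^\lambda=s_{i_1}\cdots s_{i_\ell}$ and let $t$ be minimal with $\langle w\mu,\alpha_{i_t}^\vee\rangle>0$; set $j=i_t$ and $u=s_{i_1}\cdots s_{i_{t-1}}\in H$. My plan is to exhibit an element $b_1\otimes b_2\in B_v(\lambda)\otimes B_w(\mu)$ — naturally obtained by tracking $b_\lambda\otimes b_{w\mu}$ through the $T$-operators corresponding to the final letters $s_{i_{t+1}},\dots,s_{i_\ell}$ and then raising by $T_u$ — whose $j$-string inside the ambient $B(\lambda)\otimes B(\mu)$ meets $B_v(\lambda)\otimes B_w(\mu)$ in a set that is neither empty, the full string, nor only the string's highest-weight vector. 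The positivity $\langle w\mu,\alpha_j^\vee\rangle>0$ supplies the extra $j$-string length on the $B_w(\mu)$ side, while the failure $us_j\notin H$ places a strict upper bound on how far that string continues inside the Demazure tensor product.

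The hardest step will be precisely this $(3)\Rightarrow(2)$ argument: isolating the correct witness $b_1\otimes b_2$ and tracing its $j$-string carefully against $B_v(\lambda)\otimes B_w(\mu)$. The tensor-product $f_j$-rule splits the action between the two factors according to whether $\varphi_j(b_1)>\varepsilon_j(b_2)$, so the bookkeeping needed to pin down exactly where the $j$-string exits $B_v(\lambda)\otimes B_w(\mu)$ requires a careful case analysis driven by the extremal-weight pairings $\langle u\lambda,\alpha_j^\vee\rangle$ and $\langle w\mu,\alpha_j^\vee\rangle$. By contrast, the compatibility statement underlying $(2)\Rightarrow(1)$ is largely routine once one invokes \thmref{identity result} and \lemref{Ti invariant} componentwise.
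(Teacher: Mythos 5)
Your plan coincides with the paper's proof in structure and in most details: the cycle $(1)\Rightarrow(3)\Rightarrow(2)\Rightarrow(1)$, with $(1)\Rightarrow(3)$ from \remref{extremal subset} and \remref{union are extremal}, and $(2)\Rightarrow(1)$ obtained by pushing the decomposition of \thmref{identity result} through $T_{i_1}\cdots T_{i_k}$, using \lemref{Ti invariant} componentwise and the sign hypothesis to identify $T_{v}(b_\lambda\otimes B_w(\mu))$ with $B_v(\lambda)\otimes B_w(\mu)$ (this is the paper's \lemref{lem2} and \thmref{disjoint union}). So far, so good.

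The genuine gap is in $(3)\Rightarrow(2)$, precisely at the step you defer to ``careful case analysis.'' The witness is $b_{s_{i_{t+1}}\cdots s_{i_\ell}\lambda}\otimes b_{w\mu}$ (the prefix operators $T_u$ you mention are not needed), and the reason the $j$-string eventually exits is simply that $f_j(b_{w\mu})\neq 0$ has weight strictly below $w\mu$, the lowest weight of $B_w(\mu)$ --- not your proposed mechanism ``$us_j\notin H$.'' What you have not supplied is the reason the string also enters $B_v(\lambda)\otimes B_w(\mu)$ strictly below its highest weight vector: one needs $\varphi_j(b_{s_{i_{t+1}}\cdots s_{i_\ell}\lambda})>0$, i.e. $\langle s_{i_{t+1}}\cdots s_{i_\ell}\lambda,\alpha_{i_t}^\vee\rangle>0$ (note this pairing involves the \emph{suffix} of the reduced word, not the prefix weight $u\lambda$ you cite). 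Without it, the intersection of the $j$-string with the tensor product could be exactly the string's highest weight vector, which extremality permits, and the contradiction evaporates. The paper closes this in two lines: the pairing cannot be negative, since by \lemref{length} that would contradict reducedness of $s_{i_t}\cdots s_{i_\ell}$; and it cannot be zero, since then deleting $s_{i_t}$ would yield a shorter representative of the coset $vW_\lambda$, contradicting minimality of $v_{\min}^\lambda$. This is the one place where working with $v_{\min}^\lambda$ rather than an arbitrary representative is essential, and it is the piece your sketch must add to be complete.
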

	\begin{remark}
	Using \remref{maximal length}, we see that the necessary and sufficient condition given in \cite{Knouo} for the decomposition of the tensor product of Demazure crystals, does not work in the case of symmetrizable Kac-Moody Lie algebras.
	\end{remark}
The rest of this section covers the proof of \thmref{main thm}.	
	\begin{lemma}\label{i implies ii}
		Let $\lambda, \mu \in P^+$ and $v,w \in W$. Assume that  $B_v(\lambda)\otimes B_w(\mu)$ is an extremal subset of $B(\lambda)\otimes B(\mu)$ then $v_{\min}^\lambda \in \langle\{s_i\,|\,\langle w\mu,\alpha_i^\vee\rangle \leq 0\}\rangle$.
	\end{lemma}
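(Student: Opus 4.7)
The plan is to argue by contradiction, exhibiting an explicit $i$-string in $B(\lambda)\otimes B(\mu)$ whose intersection with $X := B_v(\lambda)\otimes B_w(\mu)$ is neither empty, nor the whole string, nor just the top.

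Since $B_v(\lambda)=B_{v_{\min}^\lambda}(\lambda)$, I would replace $v$ by $v_{\min}^\lambda$ and assume for contradiction that $v = v_{\min}^\lambda\notin \langle\{s_j : \langle w\mu,\alpha_j^\vee\rangle\le 0\}\rangle$. By the standard Coxeter fact that an element lies in a standard parabolic subgroup if and only if every reduced expression uses only its generators, there exists $i$ with $\langle w\mu,\alpha_i^\vee\rangle > 0$ such that $s_i$ appears in every reduced expression of $v$. A first observation is that $l(s_iw)>l(w)$: otherwise $w^{-1}(\alpha_i)<0$ by \lemref{length}, giving $\langle w\mu,\alpha_i^\vee\rangle=\langle \mu,w^{-1}\alpha_i^\vee\rangle\le 0$ since $\mu\in P^+$, a contradiction. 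The extremal property of $b_{w\mu}$ combined with $c := \langle w\mu,\alpha_i^\vee\rangle>0$ yields $\varepsilon_i(b_{w\mu})=0$ and $\varphi_i(b_{w\mu})=c$, so the $i$-string of $b_{w\mu}$ in $B(\mu)$ has $c+1$ elements. By extremality of $B_w(\mu)$ (\remref{extremal subset}), its intersection with this $i$-string is $\emptyset$, the whole string, or $\{b_{w\mu}\}$; the first is excluded as $b_{w\mu}\in B_w(\mu)$, and the second is excluded because $b_{s_iw\mu}=f_i^c b_{w\mu}\notin B_w(\mu)$ (as $s_iw\not\le w$ modulo $W_\mu$, using $l(s_iw)>l(w)$ and $s_iw\mu\ne w\mu$). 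Hence $f_i^j b_{w\mu}\notin B_w(\mu)$ for $j\ge 1$.

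Next I would pick the witness element. Fix a reduced expression $v=u's_iu$ in which $u$ is the suffix after the last occurrence of $s_i$, so $u$ uses no $s_i$ and $l(v)=l(u')+1+l(u)$. Setting $v_1:=s_iu$, the subword $s_iu$ shows $v_1\le v$, so by \lemref{Ti invariant} and \remref{demazure subset} we have $B_{v_1}(\lambda)=T_iB_u(\lambda)\subseteq B_v(\lambda)$, and thus $b_{u\lambda}\in B_u(\lambda)\subseteq B_v(\lambda)$. Since $l(s_iu)>l(u)$ gives $u^{-1}(\alpha_i)>0$, we have $\langle u\lambda,\alpha_i^\vee\rangle\ge 0$; moreover $a':=\langle u\lambda,\alpha_i^\vee\rangle>0$, for $a'=0$ would place $s_i$ in $uW_\lambda u^{-1}$ and give $u'u\in vW_\lambda$ with $l(u'u)\le l(v)-1$, contradicting the minimality of $v$. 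The extremal property then yields $\varepsilon_i(b_{u\lambda})=0$ and $\varphi_i(b_{u\lambda})=a'\ge 1$.

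Finally I would trace the $i$-string $S$ through $b_{u\lambda}\otimes b_{w\mu}$ in $B(\lambda)\otimes B(\mu)$. Using $\varepsilon_i(b_{u\lambda})=\varepsilon_i(b_{w\mu})=0$, $\varphi_i(b_{u\lambda})=a'$, $\varphi_i(b_{w\mu})=c$ and the tensor product rule, $b_{u\lambda}\otimes b_{w\mu}$ is the top of $S$, and $S$ descends as $f_i^k b_{u\lambda}\otimes b_{w\mu}$ for $0\le k\le a'$, followed by $f_i^{a'}b_{u\lambda}\otimes f_i^j b_{w\mu}$ for $1\le j\le c$. Since $f_i^k b_{u\lambda}\in B_{v_1}(\lambda)\subseteq B_v(\lambda)$, the first $a'+1\ge 2$ elements lie in $X$, while the remaining $c\ge 1$ do not, because $f_i^j b_{w\mu}\notin B_w(\mu)$. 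Therefore $S\cap X$ is neither $\emptyset$, nor $S$, nor $\{b_{u\lambda}\otimes b_{w\mu}\}$, contradicting the extremality of $X$. The hard part is the careful bookkeeping in this last step: one must verify that the tensor-product rule causes the descent through $S$ to switch components exactly at step $a'$ (a consequence of $\varepsilon_i(b_{u\lambda})=\varepsilon_i(b_{w\mu})=0$), and that crossing this switch moves outside $X$ via the characterization of which $f_i$-descendants of $b_{w\mu}$ leave $B_w(\mu)$.
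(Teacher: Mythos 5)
Your proposal is correct and follows essentially the same strategy as the paper: pick a letter $s_i$ of $v_{\min}^\lambda$ with $\langle w\mu,\alpha_i^\vee\rangle>0$, show the extremal vector $b_{u\lambda}$ attached to the suffix after that letter satisfies $\varphi_i(b_{u\lambda})>0$ (ruling out $\langle u\lambda,\alpha_i^\vee\rangle<0$ by reducedness and $=0$ by minimality of $v_{\min}^\lambda$, exactly as in the paper's cases (a)--(c)), and then exhibit the $i$-string through $b_{u\lambda}\otimes b_{w\mu}$ meeting $B_v(\lambda)\otimes B_w(\mu)$ in at least two but not all elements. The only cosmetic difference is your justification that $f_i^j b_{w\mu}\notin B_w(\mu)$ for $j\ge 1$, which you route through extremality of $B_w(\mu)$ and Bruhat order, whereas the paper uses the more direct observation that $f_i b_{w\mu}$ has weight $w\mu-\alpha_i$ while $w\mu$ is the lowest weight of $B_w(\mu)$.
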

	\begin{proof}
	Let $v_{\min}^{\lambda} = s_{i_1}\cdots s_{i_k}$ be a reduced expression of $v_{\min}^\lambda$ in $ W$. Assume that $v_{\min}^{\lambda}\notin \langle\{s_i\,|\,\langle w\mu,\alpha_i^\vee\rangle \leq 0\}\rangle$. Then there exists $i_j$ such that $\langle w\mu,\alpha_{i_j}^\vee\rangle \,> 0$ which implies that 
	$$\varepsilon_{i_j}(b_{w\mu}) =0, \quad \varphi_{i_j}(b_{w\mu}) = \langle w\mu,\alpha_{i_j}^\vee\rangle \,> 0.$$ 
	where $b_{w\mu}$ is defined in \eqref{defn of bwlambda}. Then we have
	 \begin{equation}\label{eij is zero}
		e_{i_j}(b_{w\mu}) =0, \quad f_{i_j}(b_{w\mu}) \neq 0. 
	\end{equation} Moreover, $wt(f_{i_j}(b_{w\mu}))=w\mu-\alpha_{i_j}<w\mu$ but $w\mu$ is the lowest weight in $B_w(\mu)$ . Therefore, we have $f_{i_j}(b_{w\mu})\notin B_w(\mu)\coprod \{0\}$.\\
	
 \textbf{ Claim:}	$ f_{i_j}(b_{s_{i_{j+1}}\cdots s_{i_k}\lambda}) \neq 0 \text{ and } f_{i_j}(b_{s_{i_{j+1}}\cdots s_{i_k}\lambda})\in B_v(\lambda).$\\
		
		(a) Suppose $\langle s_{i_{j+1}}\cdots s_{i_k} \lambda\,,\alpha_{i_j}^{\vee}\rangle < 0$, then
		$\langle  \lambda\,,(s_{i_{j+1}}\cdots s_{i_k})^{-1}\alpha_{i_j}^{\vee}\rangle < 0$. We have $(s_{i_{j+1}}\cdots s_{i_k})^{-1}\alpha_{i_j}^{\vee}$ is a negative coroot as $\lambda$ is a dominant integral weight. Using \lemref{length} we have $$l(s_{i_j}s_{i_{j+1}}\cdots s_{i_k})<l(s_{i_{j+1}} \cdots s_{i_k})$$ which is a contradiction as $s_{i_1}\cdots s_{i_k}$ is a reduced expression of $v_{\min}^{\lambda}$.
		 
		(b) Suppose $\langle s_{i_{j+1}}\cdots s_{i_k} \lambda\,,\alpha_{i_j}^{\vee}\rangle = 0$, then $ s_{i_{j}}(s_{i_{j+1}}\cdots s_{i_k} \lambda)\, = s_{i_{j+1}}\cdots s_{i_k}\lambda$. We have $$vW_\lambda = v_{\min}^{\lambda}W_\lambda = s_{i_1}\cdots s_{i_{j-1}}s_{i_{j+1}}\cdots s_{i_k}W_\lambda \implies s_{i_1}\cdots s_{i_{j-1}}s_{i_{j+1}}\cdots s_{i_k} \in vW_{\lambda}$$ which is a contradiction as $v_{\min}^{\lambda}$ is a minimal length representative of $vW_{\lambda}$.
		
		(c) Suppose $\langle s_{i_{j+1}}\cdots s_{i_k} \lambda\,,\alpha_{i_j}^{\vee}\rangle > 0$ then $\varepsilon_{i_j}(b_{s_{i_{j+1}}\cdots s_{i_k}\lambda}) =0$ which implies that $$e_{i_j}(b_{s_{i_{j+1}}\cdots s_{i_k}\lambda}) =0,\, \text{ and }f_{i_j}(b_{s_{i_{j+1}}\cdots s_{i_k}\lambda}) \neq 0.$$
		
		Now we prove that $f_{i_j}(b_{s_{i_{j+1}}\cdots s_{i_k}\lambda})\in B_v(\lambda)$. We have $ f_{i_j}(b_{s_{i_{j+1}}\cdots s_{i_k}\lambda})\in B_{s_{i_j}\cdots s_{i_k}}(\lambda)$ and $s_{i_j}\cdots s_{i_k}$ is a subexpression of a reduced expression $s_{i_1}\cdots s_{i_k}= v_{\min}^\lambda$. Then $s_{i_j}\cdots s_{i_k} \leq v_{\min}^\lambda$ in Bruhat order. Using \remref{demazure subset}, we have $$B_{s_{i_j}\cdots s_{i_k}}(\lambda)\subseteq B_{v_{\min}^\lambda}(\lambda) = B_v(\lambda).$$ Hence, we have the claim.

		Using the claim proved above and \eqref{eij is zero}, we have 
	   $$f_{i_j}(b_{s_{i_{j+1}}\cdots s_{i_k}\lambda})\neq 0, \quad e_{i_j}(b_{w\mu})=0$$ 
	   which implies that $\varphi_{i_j}(b_{s_{i_{j+1}}\cdots s_{i_k}\lambda})>0=\varepsilon_{i_j}(b_{w\mu})$. By the tensor product rule and the claim, we get
	   $$f_{i_j}(b_{s_{i_{j+1}}\cdots s_{i_k}\lambda}\, \otimes b_{w\mu}) = f_{i_j}b_{s_{i_{j+1}}\cdots s_{i_k}\lambda}\, \otimes b_{w\mu}\in B_v(\lambda)\otimes B_w(\mu).$$
	   Let $\varphi_{i_j}(b_{s_{i_{j+1}}\cdots s_{i_k}\lambda}) =n$ then
	   $$f_{i_j}^{n+1}(b_{s_{i_{j+1}}\cdots s_{i_k}\lambda}\, \otimes b_{w\mu}) = f_{i_j}^n(b_{s_{i_{j+1}}\cdots s_{i_k}\lambda})\otimes f_{i_j}(b_{w\mu}) \neq 0$$ 
	   but $f_{i_j}(b_{w\mu})\notin B_w(\mu)$ which implies that 
	   $$f_{i_j}^{n+1}(b_{s_{i_{j+1}}\cdots s_{i_k}\lambda}\, \otimes b_{w\mu})\notin B_v(\lambda)\otimes B_w(\mu).$$
	   Hence, $B_v(\lambda)\otimes B_w(\mu)$ does not contain the $i_j$-string which implies that it is not an extremal subset of $B(\lambda)\otimes B(\mu)$ which is a contradiction. Therefore, we have $v_{\min}^{\lambda}\in \langle\{s_i\,|\,\langle w\mu,\alpha_i^\vee\rangle \leq 0\}\rangle$.  
	\end{proof}
	\subsection{} Let $v = s_{i_1}\cdots s_{i_k}$ be a reduced expression of $v$ in $W$. Denote \begin{equation}\label{Tv of blambda}
	T_{v,\lambda,w,\mu} = T_v(b_{\lambda}\otimes B_w(\mu)) = T_{i_1}\cdots T_{i_k}(b_{\lambda}\otimes B_w(\mu)).
	\end{equation} 
	\begin{lemma}\label{lem2}
		Let $\lambda,\mu \in P^+$ and  $v,w\in W$ such that $s_{i_1} \cdots s_{i_k}$ is a reduced expression of $v$ in $W$. Then the following statements hold:
		\begin{enumerate}
			\item $B_v(\lambda)\otimes B_w(\mu) \subseteq  T_{v,\lambda,w,\mu}$.
			\item $B_v(\lambda)\otimes B_w(\mu) = T_{v,\lambda,w,\mu}$ if $v\in \langle\{s_i\,|\,\langle w\mu,\alpha_i^\vee\rangle \leq 0\}\rangle$.
		\end{enumerate} 
	\end{lemma}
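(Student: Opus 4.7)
The plan is to prove both statements by induction on $k = l(v)$, via the shrinkage $v \mapsto v' := s_{i_2}\cdots s_{i_k}$ (so $v = s_{i_1}v'$ and $l(v') = k-1$). The base case $k=0$ is immediate since $B_e(\lambda) = \{b_\lambda\}$ makes both sides equal to $b_\lambda \otimes B_w(\mu)$.

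For the inductive step of Part (1), \lemref{Ti invariant} gives $B_v(\lambda) = T_{i_1}B_{v'}(\lambda)$, so an arbitrary element of $B_v(\lambda) \otimes B_w(\mu)$ has the form $z = f_{i_1}^m(x) \otimes y$ with $x \in B_{v'}(\lambda)$, $y \in B_w(\mu)$, $m \geq 0$. The key idea is to identify the top element $z^*$ of the $i_1$-string through $z$ inside $B(\lambda) \otimes B(\mu)$. Applying the tensor-product rule for $e_{i_1}$ (and using $\varepsilon_{i_1}(f_{i_1}^m(x)) = \varepsilon_{i_1}(x)+m$ in the normal crystal $B(\lambda)$), one checks that successive $e_{i_1}$'s on $z$ first reduce the left factor all the way to $x^* := e_{i_1}^{\varepsilon_{i_1}(x)}(x)$, after which any remaining $e_{i_1}$'s shift the right factor, producing $z^* = x^* \otimes y^*$ with $y^* = e_{i_1}^q(y)$ for some $q \geq 0$. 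Because the Demazure crystals $B_{v'}(\lambda)$ and $B_w(\mu)$ are both $e_{i_1}$-stable, $z^* \in B_{v'}(\lambda) \otimes B_w(\mu)$. Hence $z \in T_{i_1}\{z^*\} \subseteq T_{i_1}(B_{v'}(\lambda) \otimes B_w(\mu)) \subseteq T_{i_1}(T_{v',\lambda,w,\mu}) = T_{v,\lambda,w,\mu}$, where the last inclusion applies the inductive hypothesis for $v'$.

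For Part (2) we further assume $v \in \langle J\rangle$ where $J = \{s_i : \langle w\mu, \alpha_i^\vee\rangle \leq 0\}$. By the subword property of standard parabolic subgroups of Coxeter groups, every simple reflection in a reduced expression of $v$ lies in $J$; in particular $v' \in \langle J\rangle$ and $\langle w\mu, \alpha_{i_1}^\vee\rangle \leq 0$. The inductive hypothesis gives $T_{v',\lambda,w,\mu} = B_{v'}(\lambda) \otimes B_w(\mu)$ and Part (1) supplies $B_v(\lambda) \otimes B_w(\mu) \subseteq T_{v,\lambda,w,\mu}$, so it remains to show $T_{i_1}(B_{v'}(\lambda) \otimes B_w(\mu)) \subseteq B_v(\lambda) \otimes B_w(\mu)$. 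By the tensor-product rule every element of the left-hand side is of the form $f_{i_1}^{m_1}(x) \otimes f_{i_1}^{m_2}(y)$ with $x \in B_{v'}(\lambda)$, $y \in B_w(\mu)$. The left factor sits in $T_{i_1}B_{v'}(\lambda) = B_v(\lambda)$ automatically; the remaining content is to show $f_{i_1}^{m_2}(y) \in B_w(\mu)$, i.e.\ that $B_w(\mu)$ is closed under $f_{i_1}$.

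The main obstacle is therefore to extract $f_{i_1}$-stability of $B_w(\mu)$ from the combinatorial hypothesis $\langle w\mu, \alpha_{i_1}^\vee\rangle \leq 0$. If $l(s_{i_1}w) < l(w)$ this is immediate from \lemref{Ti invariant}. Otherwise \lemref{length} gives $w^{-1}\alpha_{i_1} > 0$, and the dominance of $\mu$ combined with the hypothesis forces $\langle \mu, w^{-1}\alpha_{i_1}^\vee\rangle = 0$; equivalently $w^{-1}s_{i_1}w \in W_\mu$, so $s_{i_1}w \in wW_\mu$. I would then invoke the standard fact that $B_{w'}(\mu)$ depends only on the coset $w'W_\mu$---a consequence of $T_u\{b_\mu\} = \{b_\mu\}$ for $u \in W_\mu$ (since each generator $s_j \in W_\mu$ satisfies $f_j b_\mu = 0$) together with the additive length property $l(w_{\min}^\mu u) = l(w_{\min}^\mu) + l(u)$ for $u \in W_\mu$. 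Consequently $B_{s_{i_1}w}(\mu) = B_w(\mu)$, and \lemref{Ti invariant} yields $T_{i_1}B_w(\mu) = B_{s_{i_1}w}(\mu) = B_w(\mu)$. This produces the required $f_{i_1}$-stability and closes the induction.
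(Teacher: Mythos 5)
Your proof is correct and follows essentially the same route as the paper's: part (1) by raising an arbitrary element up to the primitive piece $b_\lambda\otimes B_w(\mu)$ using $e_{i_1}$-stability of the Demazure crystals, and part (2) by splitting $\langle w\mu,\alpha_{i_1}^\vee\rangle\leq 0$ into the cases $<0$ (length drops) and $=0$ (coset argument) to get $T_{i_1}B_w(\mu)=B_w(\mu)$ and then applying the tensor rule to powers of $f_{i_1}$ --- the only structural difference is that you run an induction on $l(v)$ where the paper performs all $k$ steps in one pass. One small slip in part (1): the tensor-product rule raises the \emph{right} factor first (until its $\varepsilon_{i_1}$ drops to $\varphi_{i_1}$ of the left factor) and only then the left factor, not the order you describe, but the endpoint $z^*=x^*\otimes e_{i_1}^{q}(y)$ that you actually use is correct.
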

	\begin{proof}\textit{(1)}
	Let $ b_1 \otimes b_2 $ be a non-zero element in $B_v(\lambda)\otimes B_w(\mu)$. Using \eqref{Tv of blambda}, we need to prove that $b_1 \otimes b_2 \in T_{i_1}\cdots T_{i_k}(b_{\lambda}\otimes B_w(\mu))$. By the definition of crystals, it is sufficient to prove that 
		$$e_{i_k}^{\varepsilon_{i_k}(b_1\otimes b_2)}\cdots e_{i_1}^{\varepsilon_{i_1}(b_1\otimes b_2)}(b_1 \otimes b_2)\in b_{\lambda}\otimes B_w(\mu).$$
		 Since $b_1 \in B_v(\lambda) = T_{i_1}\cdots T_{i_k}\{b_{\lambda}\}$, then $b_1 = f_{i_1}^{c_1}\cdots f_{i_k}^{c_k}(b_{\lambda})$ where $c_j := \varepsilon_{i_j}(e_{i_{j-1}}^{c_{j-1}}\cdots e_{i_1}^{c_1}(b_1))$ defined inductively for all $j=1,2,\cdots,k$. Then there exist $s_1\geq 0$ such that 
	 $$e_{i_1}^{\varepsilon_{i_1}(b_1\otimes b_2)}(f_{i_1}^{c_1}\cdots f_{i_k}^{c_k}b_{\lambda}\otimes b_2) = f_{i_2}^{c_2}\cdots f_{i_k}^{c_k}b_{\lambda}\otimes e_{i_1}^{s_1}b_2.$$ 
		 Continue like this, we get $s_j\geq 0$ for all $ j\in \{2,3,\cdots,k\}$ such that
	 $$e_{i_k}^{\varepsilon_{i_k}(b_1\otimes b_2)} \cdots e_{i_1}^{\varepsilon_{i_1}(b_1\otimes b_2)}(f_{i_1}^{c_1}\cdots f_{i_k}^{c_k}b_{\lambda}\otimes b_2) = b_{\lambda}\otimes e_{i_k}^{s_k}\cdots e_{i_1}^{s_1}b_2.$$ 
		Since $B_w(\mu)$ is stable under $e_{i}$ for all $i\in I$  and $e_{i_k}^{\varepsilon_{i_k}(b_1\otimes b_2)}\cdots e_{i_1}^{\varepsilon_{i_1}(b_1\otimes b_2)}(b_1 \otimes b_2)\neq 0$ which implies that $e_{i_k}^{s_k}\cdots e_{i_1}^{s_1}b_2 \in B_w(\mu)$. Hence, $$ e_{i_k}^{\varepsilon_{i_k}(b_1\otimes b_2)}\cdots e_{i_1}^{\varepsilon_{i_1}(b_1\otimes b_2)}(b_1 \otimes b_2)\in b_{\lambda}\otimes B_w(\mu).$$
 \\
\textit{(2)} For $b\in B_w(\mu)$, we need to prove that $T_{i_j}(b)\in B_w(\mu), \forall j \in \{1,2,\cdots, k\}.$ We are given that $v\in \langle\{s_i\,|\,\langle w\mu,\alpha_i^\vee\rangle \,\leq 0\}\rangle$. Therefore, $\langle w\mu,\alpha_{i_j}^\vee\rangle \leq 0, \, \forall j \in \{1,2,\cdots, k\}.$\\

  \textit{Case I:} If $\langle w\mu,\alpha_{i_j}^\vee\rangle <0$, then $\langle \mu, w^{-1}\alpha_{i_j}^\vee\rangle < 0$. We get that $w^{-1}\alpha_{i_j}^\vee$ is a negative coroot as $\mu$ is a dominant integral weight. Using \lemref{length} and \lemref{Ti invariant}, we have $$l(s_{i_j}w)<l(w) \implies T_{i_j}B_w(\mu) = B_w(\mu)$$
   
   \textit{Case II:} If $\langle w\mu,\alpha_{i_j}^\vee\rangle =0$, then $s_{i_j}w\mu = w\mu$ and $s_{i_j}wW_{\mu} = wW_{\mu}$. This implies that $$T_{i_j}B_w(\mu) = B_{s_{i_j}w}(\mu) = B_w(\mu).$$ Hence, for all $j\in \{1,2,\cdots,k\}$ we have 
\begin{equation}\label{Tij(b)}
			T_{i_j}(b)\in B_w(\mu).
		\end{equation} 
		For all $b\in B_w(\mu)$ and non-negative integers $c_1,\cdots, c_k$, we need to prove that
  \begin{equation}\label{fi1...fik}
  	f_{i_1}^{c_1}\cdots f_{i_k}^{c_k}(b_{\lambda}\otimes b)\in B_v(\lambda)\otimes B_w(\mu) \text{ if } f_{i_1}^{c_1}\cdots f_{i_k}^{c_k}(b_{\lambda}\otimes b)\neq 0.
  \end{equation}
		Suppose $f_{i_1}^{c_1}\cdots f_{i_k}^{c_k}(b_{\lambda}\otimes b) \neq 0$, then there exist non-negative integers $a_k, d_k$ such that 
	$$f_{i_k}^{c_k}(b_{\lambda}\otimes b) = f_{i_k}^{a_k}(b_{\lambda})\otimes f_{i_k}^{d_k}(b).$$ 
	    Using \eqref{Tij(b)}, we have $f_{i_k}^{d_{k}}(b)\in B_w(\mu)$. Continue like this we get there exist non-negative integers $a_1, \cdots, a_k$ and $d_1, \cdots, d_k$ such that
  $$f_{i_1}^{c_1}\cdots f_{i_k}^{c_k}(b_{\lambda}\otimes b) = f_{i_1}^{a_1}\cdots f_{i_k}^{a_k}(b_{\lambda})\otimes f_{i_1}^{d_1}\cdots f_{i_k}^{d_k}(b).$$ 
	    Since $s_{i_1}\cdots s_{i_k}$ is a reduced expression of $v$ in $W$ and $f_{i_1}^{c_1}\cdots f_{i_k}^{c_k}(b_{\lambda}\otimes b)\neq 0$ then $f_{i_1}^{a_1}\cdots f_{i_k}^{a_k}(b_{\lambda})\in B_v(\lambda)$. Using \eqref{Tij(b)}, we have $f_{i_1}^{d_1}\cdots f_{i_k}^{d_k}(b) \in B_w(\mu)$. Hence, 
	    $$T_{v,\lambda,w,\mu} = T_{i_1}\cdots T_{i_k}(b_{\lambda}\otimes B_w(\mu)) \subseteq B_v(\lambda)\otimes B_w(\mu).$$  
	\end{proof}
		\subsection{}\label{uj subsection} From \thmref{identity result}, we have 
	$$B_e(\lambda)\otimes B_w(\mu) = \coprod_{b\in B_w(\mu)^{\lambda}}T_{y_{w,b}^{\lambda}}(b_{\lambda}\otimes b)$$
	For $v\in W$, let $s_{i_1}\cdots s_{i_k}$ be a reduced expression of $v_{\min}^{\lambda}$ in $W$. Define $u_j \in W$ inductively for $j=1,2,\cdots, k$. 
	
	Set 
	$$u_k := \begin{cases}
		s_{i_k}y_{w,b}^{\lambda} & \text{ if } l(s_{i_k}y_{w,b}^{\lambda}) > l(y_{w,b}^{\lambda}),\\
		y_{w,b}^{\lambda} & \text{ if } l(s_{i_k}y_{w,b}^{\lambda})<l(y_{w,b}^{\lambda}).
	\end{cases}$$ 
	Suppose $u_j \in W$ is defined for some $j\in \{2,\cdots,k\}$, then define $$u_{j-1} := \begin{cases}
		s_{i_{j-1}}u_j & \text{ if } l(s_{i_{j-1}}u_j) > l(u_j),\\
		u_j & \text{ if } l(s_{i_{j-1}}u_j)<l(u_j).
	\end{cases}$$
	For $b\in B_{w}(\mu)^{\lambda}$, define $u(b,v):= u_1$. The definition of $u(b,v)$ depends on the choice of a reduced expression of $v_{\min}^{\lambda}$.
	The following theorem proves that $B_v(\lambda)\otimes B_w(\mu)$ is isomorphic to the disjoint union of Demazure crystals if $v_{\min}^{\lambda}\in \langle \{s_i\, | \,\langle w\mu,\alpha_{i}^{\vee}\rangle\leq 0\} \rangle$ and explicitly mention the Demazure crystals that appear in the disjoint union.  
	\begin{theorem}\label{disjoint union}
		Let $\lambda, \mu \in P^+,\,v,w\in W, $ and $s_{i_1}\cdots s_{i_k}$ be a reduced expression of $v_{\min}^\lambda$ in $W$. Assume that $v_{\min}^{\lambda}\in \langle \{s_i\, | \langle w\mu,\alpha_{i}^{\vee}\rangle\leq 0\} \rangle$, then 
		$$B_v(\lambda)\otimes B_w(\mu)\cong \coprod_{b\in B_w(\mu)^{\lambda}}B_{u(b,v)}(\lambda+wt(b)).$$ 
	\end{theorem}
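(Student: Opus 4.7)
The plan is to reduce to the case $v = v_{\min}^\lambda$, invoke \thmref{identity result} to decompose $b_\lambda \otimes B_w(\mu)$, and then track each summand under $T_{v_{\min}^\lambda}$ using \lemref{Ti invariant}. The reduction is straightforward: each generator $s_i$ of $W_\lambda$ satisfies $\langle \lambda, \alpha_i^\vee\rangle = 0$, hence $\varphi_i(b_\lambda)=0$ and $T_i\{b_\lambda\}=\{b_\lambda\}$; writing $v = v_{\min}^\lambda w'$ with $w' \in W_\lambda$ and $l(v)=l(v_{\min}^\lambda)+l(w')$ (so that reduced expressions concatenate) yields $B_v(\lambda) = T_{v_{\min}^\lambda} T_{w'}\{b_\lambda\} = B_{v_{\min}^\lambda}(\lambda)$. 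Combined with \lemref{lem2}(2) under the standing hypothesis, this gives
\begin{equation*}
B_v(\lambda)\otimes B_w(\mu) = T_{v_{\min}^\lambda}\bigl(b_\lambda \otimes B_w(\mu)\bigr).
\end{equation*}

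Next, \thmref{identity result} decomposes
\begin{equation*}
b_\lambda \otimes B_w(\mu) = \coprod_{b\in B_w(\mu)^\lambda} T_{y_{w,b}^\lambda}(b_\lambda \otimes b),
\end{equation*}
with the $b$-th summand identified with $B_{y_{w,b}^\lambda}(\lambda+wt(b))$ inside the connected component $\mathcal{F}(b_\lambda \otimes b)\cong B(\lambda+wt(b))$ of $B(\lambda)\otimes B(\mu)$. Since $T_{v_{\min}^\lambda}$ is built from the lowering operators $f_{i_j}$ and each connected component is closed under all $f_i$, applying $T_{v_{\min}^\lambda}$ sends each summand into itself, so the union remains disjoint and
\begin{equation*}
B_v(\lambda)\otimes B_w(\mu) = \coprod_{b\in B_w(\mu)^\lambda} T_{v_{\min}^\lambda}\,T_{y_{w,b}^\lambda}(b_\lambda \otimes b).
\end{equation*}

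Finally, I would identify each summand via the crystal isomorphism $\mathcal{F}(b_\lambda \otimes b)\cong B(\lambda+wt(b))$: applying $T_{i_k},T_{i_{k-1}},\ldots,T_{i_1}$ in succession to $B_{y_{w,b}^\lambda}(\lambda+wt(b))$ and invoking \lemref{Ti invariant} at each step produces, after the $(k-j+1)$-th application, the Demazure crystal $B_{u_j}(\lambda+wt(b))$, where $u_j$ is precisely the element defined recursively in \subsecref{uj subsection}. After all $k$ stages we arrive at $B_{u(b,v)}(\lambda+wt(b))$, completing the identification. The main obstacle is disjointness after applying $T_{v_{\min}^\lambda}$: a priori these operators could merge different summands, but because distinct summands lie in distinct connected components of $B(\lambda)\otimes B(\mu)$ --- each closed under the crystal operators --- no such collapse can occur; once this point is secured, the rest is iterated application of \lemref{Ti invariant} matched against the recursive definition of $u(b,v)$.
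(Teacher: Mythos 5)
Your proposal is correct and follows essentially the same route as the paper: reduce to $v_{\min}^{\lambda}$, use \lemref{lem2}(2) to write $B_v(\lambda)\otimes B_w(\mu)=T_{v_{\min}^{\lambda}}(b_{\lambda}\otimes B_w(\mu))$, push \thmref{identity result} through $T_{v_{\min}^{\lambda}}$ while preserving disjointness via the connected components of $B(\lambda)\otimes B(\mu)$, and identify each summand by iterating \lemref{Ti invariant} against the recursion defining $u(b,v)$. The only cosmetic difference is that you spell out the reduction $B_v(\lambda)=B_{v_{\min}^{\lambda}}(\lambda)$ (which the paper leaves implicit via $vW_{\lambda}=v_{\min}^{\lambda}W_{\lambda}$), and you argue disjointness directly from the components of the ambient crystal rather than from the components $\mathcal{F}(b_{\lambda}\otimes b,v)$ of the tensor product itself; these are equivalent.
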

	\begin{proof} We are given that 
		\begin{equation} \label{lessthan}
			v_{\min}^{\lambda}\in \langle \{s_i\,|\, \langle w\mu ,\alpha_i^{\vee}\rangle\leq 0\}\rangle.\end{equation} 
			From \eqref{decom of highest}, we have 
			\begin{equation}\label{connected components}
				B_v(\lambda)\otimes B_w(\mu) = \coprod_{b\in B_w(\mu)^\lambda} \mathcal{F}(b_{\lambda}\otimes b,v)
			\end{equation} 
			where $\mathcal{F}(b_{\lambda}\otimes b,v)$ denotes the connected component of $B_v(\lambda)\otimes B_w(\mu)$ containing $b_\lambda \otimes b$.	
		Using \eqref{lessthan}, \lemref{lem2}, and $vW_{\lambda} = v_{\min}^{\lambda}W_{\lambda}$, we have
		\begin{equation}\label{tensor equal t}
			B_v(\lambda)\otimes B_w(\mu) = B_{v_{\min}^{\lambda}}(\lambda)\otimes B_{w}(\mu)
			= T_{v_{\min}^{\lambda},\lambda,w,\mu} = T_{i_1}\cdots T_{i_k}(b_{\lambda} \otimes B_{w}(\mu))
		\end{equation} 
		Using \eqref{tensor equal t} and \thmref{identity result}, we have $$\begin{array}{ll}
			B_v(\lambda)\otimes B_w(\mu)   &= T_{i_1}\cdots T_{i_k}(\coprod_{b\in B_w(\mu)^{\lambda}} T_{y_{w,b}^{\lambda}}(b_{\lambda}\otimes b))\\
			& = \bigcup_{b\in B_w(\mu)^{\lambda}}(T_{i_1}\cdots T_{i_k}T_{y_{w,b}^{\lambda}}(b_{\lambda}\otimes b))
		\end{array}$$
		The subset $T_{i_1}\cdots T_{i_k}(T_{y_{w,b}^{\lambda}}(b_{\lambda}\otimes b))$ of $B_v(\lambda)\otimes B_w(\mu)$ is connected and contains $b_{\lambda}\otimes b$ which implies that $T_{i_1}\cdots T_{i_k}(T_{y_{w,b}^{\lambda}}(b_{\lambda}\otimes b))\subseteq \mathcal{F}( b_\lambda \otimes b,v).$ Using \eqref{connected components}, we have  $T_{i_1}\cdots T_{i_k}(T_{y_{w,b}^{\lambda}}(b_{\lambda}\otimes b)) =  \mathcal{F}(b_\lambda \otimes b,v)$ and 
		$$B_v(\lambda)\otimes B_w(\mu)= \coprod_{b\in B_w(\mu)^{\lambda}}T_{i_1}\cdots T_{i_k}(T_{y_{w,b}^{\lambda}}(b_{\lambda}\otimes b)).$$ 
	Using \thmref{identity result}, we get that $T_{y_{w,b}^{\lambda}}(b_{\lambda}\otimes b)\cong B_{y_{w,b}^{\lambda}}(\lambda+wt(b))$ which implies that $$T_{i_1}\cdots T_{i_k}(T_{y_{w,b}^{\lambda}}(b_{\lambda}\otimes b)) \cong T_{i_1}\cdots T_{i_k}(B_{y_{w,b}^{\lambda}}(\lambda+wt( b))).$$
		Using \lemref{Ti invariant}, we have $$T_{i_k}B_{y_{w,b}^{\lambda}}(\lambda+wt( b)) = \begin{cases}
			B_{s_{i_k}y_{w,b}^{\lambda}}(\lambda+wt( b)) & \text{if } l(s_{i_k}y_{w,b}^{\lambda})>l(y_{w,b}^{\lambda})\\
			B_{y_{w,b}^{\lambda}}(\lambda+wt( b))) & \text{if } l(s_{i_k}y_{w,b}^{\lambda})< l(y_{w,b}^{\lambda})
		\end{cases}$$
		which is same as $B_{u_k}(\lambda+wt(b))$. By induction assume that $$T_{i_j}\cdots T_{i_k}B_{y_{w,b}^{\lambda}}(\lambda+wt( b))= B_{u_j}(\lambda+wt(b))$$ for some $j\in \{2,\cdots,k\}$. We get that $$\begin{array}{ll}
			T_{i_{j-1}}T_{i_j}\cdots T_{i_k} B_{y_{w,b}^{\lambda}}(\lambda+wt( b))   &= T_{i_{j-1}}B_{u_{j}}(\lambda+wt (b)) \\
			& = \begin{cases}
				B_{s_{i_{j-1}}u_j}(\lambda+wt(b)), & \text{if } l(s_{i_{j-1}}u_j)>l(u_j),\\
				B_{u_j}(\lambda+wt(b)), & \text{if } l(s_{i_{j-1}}u_{j})< l(u_j)
			\end{cases}\\
			& = B_{u_{j-1}}(\lambda+wt(b)).
		\end{array}$$
		Hence, we have $$T_{i_1}\cdots T_{i_k}B_{y_{w,b}^{\lambda}}(\lambda+wt(b)) = B_{u_1}(\lambda+wt(b))= B_{u(b,v)}(\lambda+wt(b)).$$
	\end{proof}
	\subsection{Proof of the \thmref{main thm}}\label{proof of main thm in sec 4}  
	$(1)\implies (3)$ We are given that $B_v(\lambda)\otimes B_w(\mu)$ is isomorphic to a disjoint union of Demazure crystals. Using \remref{extremal subset}, we see that each Demazure crystal is an extremal subset. Therefore, using \remref{union are extremal} we have $B_v(\lambda)\otimes B_w(\mu)$ is an extremal subset of $B(\lambda)\otimes B(\mu)$.
	
	$(3)\implies (2)$ Using \lemref{i implies ii}, we have $v_{\min}^{\lambda}\in \langle \{s_i\, | \langle w\mu,\alpha_{i}^{\vee}\rangle\leq 0\} \rangle$.

	$(2)\implies(1)$ Let $ s_{i_1}\cdots s_{i_k}$ be a reduced expression of $v_{\min}^\lambda$ in $W$. Using \thmref{disjoint union}, we get that $B_v(\lambda)\otimes B_w(\mu)$ is isomorphic to a disjoint union of Demazure crystals. \qed

\subsection{Applications}	  Let us look at a few applications of \thmref{main thm}. Using \lemref{lem2} and \corref{cor 2}, we obtain the following corollary which is a generalization of \corref{cor 2}.
	  \begin{corollary}\label{cor3}
	  		Let $\lambda,\mu \in P^+$ and  $v,w\in W$ such that $s_{i_1} \cdots s_{i_k}$ is a reduced expression of $v$ in $W$. Assume that $v_{\min}^{\lambda} \in \langle \{s_i\,| \langle  w\mu, \alpha_i^{\vee} \rangle\leq 0 \}\rangle$ then $$\ch (B_v(\lambda)\otimes B_w(\mu)) = \ch T_{v,\lambda,w,\mu} = \ch T_v(B_e(\lambda)\otimes B_w(\mu)) = \Delta_v(e^\lambda(\Delta_w)e^\mu).$$
	  \end{corollary}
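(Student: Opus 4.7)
The plan is to assemble the chain of three equalities from the lemmas and results already at hand. The two rightmost equalities are essentially formal: by the definition of $T_{v,\lambda,w,\mu}$ in equation \eqref{Tv of blambda} together with the identity $B_e(\lambda) = \{b_\lambda\}$, both $T_{v,\lambda,w,\mu}$ and $T_v(B_e(\lambda)\otimes B_w(\mu))$ denote the same subset $T_v(b_\lambda\otimes B_w(\mu))$ of $B(\lambda)\otimes B(\mu)$, so I only need to appeal to this definition; \corref{cor 2} with $y=v$ then supplies the last identification $\ch T_v(B_e(\lambda)\otimes B_w(\mu)) = \Delta_v(e^\lambda \Delta_w e^\mu)$.

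For the first equality $\ch(B_v(\lambda)\otimes B_w(\mu)) = \ch T_{v,\lambda,w,\mu}$, my plan is to invoke \lemref{lem2}(2). The hypothesis places $v_{\min}^\lambda$ (rather than $v$ itself) in $\langle\{s_i \mid \langle w\mu,\alpha_i^\vee\rangle \leq 0\}\rangle$, so I will apply \lemref{lem2}(2) using a reduced expression of $v_{\min}^\lambda$ to obtain the set equality $B_{v_{\min}^\lambda}(\lambda)\otimes B_w(\mu) = T_{v_{\min}^\lambda,\lambda,w,\mu}$. Combined with the coset invariance $B_v(\lambda) = B_{v_{\min}^\lambda}(\lambda)$—which holds because both Demazure crystals are generated by the same extremal weight vector $b_{v\lambda} = b_{v_{\min}^\lambda\lambda}$ and depend only on the coset $vW_\lambda$—this yields $B_v(\lambda)\otimes B_w(\mu) = T_{v_{\min}^\lambda,\lambda,w,\mu}$, from which the desired character identity then follows by taking $\ch$ on both sides and invoking \corref{cor 2} once more.

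The main subtlety will be the bookkeeping between $v$ and $v_{\min}^\lambda$: \lemref{lem2}(2) must be applied to $v_{\min}^\lambda$ (using its reduced expression), whereas the corollary's right-hand side is phrased in terms of $v$ (using its reduced expression $s_{i_1}\cdots s_{i_k}$). The bridge is the Demazure character identity $\Delta_v e^\lambda = \Delta_{v_{\min}^\lambda}e^\lambda$, both expressing $\ch B_v(\lambda) = \ch B_{v_{\min}^\lambda}(\lambda)$, which together with $B_v(\lambda) = B_{v_{\min}^\lambda}(\lambda)$ reconciles the two sides. I expect the rest of the argument to be routine manipulation of characters and Demazure operators, with no further obstacle once this reduction is made.
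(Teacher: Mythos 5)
Your treatment of the two formal equalities (via \corref{cor 2} with $y=v$) and your first reduction --- applying \lemref{lem2}(2) to a reduced expression of $v_{\min}^\lambda$ and using $B_v(\lambda)=B_{v_{\min}^\lambda}(\lambda)$ to get $B_v(\lambda)\otimes B_w(\mu)=T_{v_{\min}^\lambda,\lambda,w,\mu}$ --- are correct and match what the paper does (explicitly in the proof of \thmref{disjoint union}). The gap is precisely the ``bridge'' you defer to the end. The identity $\Delta_v e^\lambda=\Delta_{v_{\min}^\lambda}e^\lambda$ does \emph{not} yield $\Delta_v(e^\lambda\Delta_we^\mu)=\Delta_{v_{\min}^\lambda}(e^\lambda\Delta_we^\mu)$: Demazure operators are not multiplicative. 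Writing $v=v_{\min}^\lambda\sigma$ reduced with $\sigma\in W_\lambda$, one has $T_v(b_\lambda\otimes B_w(\mu))=T_{v_{\min}^\lambda}\bigl(b_\lambda\otimes T_\sigma B_w(\mu)\bigr)$, and $T_\sigma B_w(\mu)$ can strictly contain $B_w(\mu)$, so $T_{v,\lambda,w,\mu}$ can strictly contain $B_v(\lambda)\otimes B_w(\mu)$. Concretely, in $\mathfrak{sl}_3$ take $\lambda=\omega_1$, $\mu=\omega_2$, $v=s_2$, $w=e$: then $v_{\min}^\lambda=e\in\langle s_1\rangle=\langle\{s_i\,|\,\langle w\mu,\alpha_i^\vee\rangle\le 0\}\rangle$, and $B_{s_2}(\omega_1)\otimes B_e(\omega_2)=\{b_{\omega_1}\otimes b_{\omega_2}\}$ has character $e^{\omega_1+\omega_2}$, whereas $f_2(b_{\omega_1}\otimes b_{\omega_2})=b_{\omega_1}\otimes f_2b_{\omega_2}\ne 0$ lies in $T_{s_2}(b_{\omega_1}\otimes B_e(\omega_2))$ and $\Delta_{s_2}(e^{\omega_1}e^{\omega_2})=e^{\omega_1+\omega_2}+e^{\omega_1+\omega_2-\alpha_2}$. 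So the chain of equalities, read literally with $T_{v,\lambda,w,\mu}$ and $\Delta_v$ attached to $v$, fails, and no argument can close your gap as stated.

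What your first two steps actually prove is the corrected statement $\ch(B_v(\lambda)\otimes B_w(\mu))=\ch T_{v_{\min}^\lambda,\lambda,w,\mu}=\Delta_{v_{\min}^\lambda}(e^\lambda\Delta_we^\mu)$; alternatively, the corollary is fine as written under the stronger hypothesis $v\in\langle\{s_i\,|\,\langle w\mu,\alpha_i^\vee\rangle\le 0\}\rangle$, in which case \lemref{lem2}(2) and \corref{cor 2} apply directly with $v$ and no bridging is needed. Be aware that the paper's own one-line justification (``using \lemref{lem2} and \corref{cor 2}'') elides the same $v$ versus $v_{\min}^\lambda$ discrepancy, since \lemref{lem2}(2) requires $v$ itself, not merely $v_{\min}^\lambda$, to lie in the subgroup.
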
 
	  Using \corref{cor3} and \thmref{main thm}, we obtain the criterion when the product of two Demazure characters is a nonnegative linear combination of Demazure characters which is a generalization of \corref{cor 1}.
	  \begin{corollary}\label{cor 4}
	  	For $\lambda,\mu \in P^+$ and  $v,w\in W$, assume that $v_{\min}^{\lambda} \in \langle \{s_i\,| \langle  w\mu, \alpha_i^{\vee} \rangle\leq 0 \}\rangle$. Then $\Delta_v(e^\lambda(\Delta_w)e^\mu)$ is a linear combination of some Demazure characters with nonnegative integer coefficients.
	  \end{corollary}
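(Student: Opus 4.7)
The plan is to deduce Corollary \ref{cor 4} directly from the two preceding results, \thmref{disjoint union} and \corref{cor3}, by taking formal characters of both sides of the crystal decomposition. Under the hypothesis $v_{\min}^{\lambda} \in \langle \{s_i\,| \langle w\mu, \alpha_i^{\vee} \rangle\leq 0 \}\rangle$, \thmref{disjoint union} provides the isomorphism
$$B_v(\lambda)\otimes B_w(\mu)\;\cong\;\coprod_{b\in B_w(\mu)^{\lambda}}B_{u(b,v)}(\lambda+wt(b)),$$
after fixing a reduced expression $s_{i_1}\cdots s_{i_k}$ of $v_{\min}^{\lambda}$ to define the elements $u(b,v)\in W$ as in Subsection \ref{uj subsection}.

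Next, I would take the formal character of the left-hand side. Under the same hypothesis, \corref{cor3} gives
$$\ch\!\bigl(B_v(\lambda)\otimes B_w(\mu)\bigr)\;=\;\Delta_v\bigl(e^{\lambda}(\Delta_w)e^{\mu}\bigr).$$
On the right-hand side, the character of a disjoint union is the sum of characters, and for each $b\in B_w(\mu)^{\lambda}$ the Demazure character formula of Kashiwara--Littelmann gives $\ch B_{u(b,v)}(\lambda+wt(b))=\Delta_{u(b,v)}e^{\lambda+wt(b)}$. Combining these yields
$$\Delta_v\bigl(e^{\lambda}(\Delta_w)e^{\mu}\bigr)\;=\;\sum_{b\in B_w(\mu)^{\lambda}}\Delta_{u(b,v)}\,e^{\lambda+wt(b)},$$
which is manifestly a linear combination of Demazure characters with nonnegative integer coefficients.

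Finally, I would observe that the sum is actually finite, since (as recorded right after \thmref{identity result}) the set $B_w(\mu)^{\lambda}=B(\mu)^{\lambda}\cap B_w(\mu)$ is finite. There is essentially no hard step here: the genuine content lies in \thmref{disjoint union} (the crystal-level decomposition) and in \corref{cor3} (the character identity), both of which have already been established. The only thing to verify in this proof is the passage from a crystal isomorphism to a character identity, which is formal.
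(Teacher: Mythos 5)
Your proposal is correct and matches the paper's (implicit) argument: combine the decomposition of \thmref{disjoint union} with the character identity of \corref{cor3} and take characters termwise via the Demazure character formula. Nothing further is needed.
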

 In particular, if $\mathfrak{g}$ is a simple finite-dimensional Lie algebra of type $A$. Then \corref{cor 4} provides a partial solution for the \textit{key positivity problem} that is when the product of two key polynomials is a linear combination of key polynomials with nonnegative integer coefficients. 
 
 Let $\nu \in P$, then there exist $u\in W$ and $\lambda_\nu \in P^+$ such that $u\lambda_{\nu} = \nu$. Set $u_\nu = u_{\min}^{\lambda_{\nu}}$ and define the \textbf{key polynomial} $\kappa_\nu$ to be $$\kappa_\nu = \ch B_{u_\nu}(\lambda_\nu).$$
 Using \thmref{disjoint union}, we obtain the following result.
 \begin{theorem}
 	For $\lambda,\mu \in P^+$ and  $v,w\in W$, assume that $v_{\min}^{\lambda} \in \langle \{s_i\,| \langle  w\mu, \alpha_i^{\vee} \rangle\leq 0 \}\rangle$. Then there exist a nonnegative integer $c_{v,\lambda,w,\mu}^\nu$ for each $\nu \in P$ such that $$\kappa_{v\lambda}\kappa_{w\mu} = \sum_{\nu \in P}c_{v,\lambda,w,\mu}^\nu \kappa_\nu.$$
 \end{theorem}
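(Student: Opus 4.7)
The plan is to take characters of both sides of the crystal isomorphism provided by Theorem~\ref{disjoint union} and then collect the resulting key polynomials by weight, reading off nonnegativity from the fact that the multiplicities are cardinalities of finite sets.

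First, I would verify that the character of $B_v(\lambda)\otimes B_w(\mu)$ is exactly $\kappa_{v\lambda}\kappa_{w\mu}$. By definition of tensor product, $\ch(B_v(\lambda)\otimes B_w(\mu))=\ch B_v(\lambda)\cdot\ch B_w(\mu)$. Since the Demazure crystal $B_v(\lambda)$ depends only on the coset $vW_\lambda$ (because $T_i\{b_\lambda\}=\{b_\lambda\}$ whenever $s_i\in W_\lambda$), we have $B_v(\lambda)=B_{v_{\min}^{\lambda}}(\lambda)$ and hence $\ch B_v(\lambda)=\kappa_{v\lambda}$ by the definition of $\kappa_\nu$ with $\nu=v\lambda$, $\lambda_\nu=\lambda$, $u_\nu=v_{\min}^{\lambda}$. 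Similarly $\ch B_w(\mu)=\kappa_{w\mu}$.

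Next, under the hypothesis $v_{\min}^{\lambda}\in\langle\{s_i\mid\langle w\mu,\alpha_i^\vee\rangle\le 0\}\rangle$, Theorem~\ref{disjoint union} gives
\[
B_v(\lambda)\otimes B_w(\mu)\;\cong\;\coprod_{b\in B_w(\mu)^{\lambda}}B_{u(b,v)}(\lambda+wt(b)).
\]
Taking characters and again using that the character of a Demazure crystal only depends on the coset in $W/W_{\lambda+wt(b)}$, each summand contributes a key polynomial: setting $\nu(b):=u(b,v)(\lambda+wt(b))$, we have $\ch B_{u(b,v)}(\lambda+wt(b))=\kappa_{\nu(b)}$. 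Consequently
\[
\kappa_{v\lambda}\kappa_{w\mu}\;=\;\sum_{b\in B_w(\mu)^{\lambda}}\kappa_{\nu(b)}.
\]

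Finally, I would define, for each $\nu\in P$,
\[
c_{v,\lambda,w,\mu}^{\nu}\;:=\;\#\{\,b\in B_w(\mu)^{\lambda}\;:\;u(b,v)(\lambda+wt(b))=\nu\,\}.
\]
Since $B_w(\mu)^{\lambda}$ is a finite set by the remark following Theorem~\ref{identity result}, each $c_{v,\lambda,w,\mu}^{\nu}$ is a nonnegative integer, yielding the desired identity. There is no real obstacle here beyond confirming that $\ch B_{u(b,v)}(\lambda+wt(b))$ equals $\kappa_{\nu(b)}$, i.e.\ that the construction of $u(b,v)$ in \S\ref{uj subsection} is compatible with passage to the minimal length representative in $W/W_{\lambda+wt(b)}$; this is automatic from the coset invariance of Demazure characters noted above, so the proof is a formal consequence of Theorem~\ref{disjoint union}.
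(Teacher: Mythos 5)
Your proposal is correct and follows the same route as the paper: the paper's proof is precisely ``take characters of both sides of Theorem~\ref{disjoint union},'' and you have simply filled in the supporting details (multiplicativity of $\ch$ under $\otimes$, coset-invariance $B_{u}(\lambda)=B_{u_{\min}^{\lambda}}(\lambda)$ so that each summand is a genuine key polynomial, and finiteness of $B_w(\mu)^{\lambda}$ to make the coefficients well-defined nonnegative integers). Nothing further is needed.
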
 	
 \begin{proof}
 	The proof follows from \thmref{disjoint union} by taking character on both the sides.
 \end{proof}
 The following corollary is a particular case of \corref{cor 4}.
 \begin{corollary}
 	For $\lambda,\mu \in P^+$ and  $v,w\in W$, assume that either $v_{\min}^{\lambda} \in \langle \{s_i\,| \langle  w\mu, \alpha_i^{\vee} \rangle\leq 0 \}\rangle$ or $w_{\min}^\mu \in \langle \{s_i\,| \langle  v\lambda, \alpha_i^{\vee} \rangle\leq 0 \}\rangle $. Then the product of $\kappa_{v\lambda}$ and $\kappa_{w\mu}$ is a linear combination of some key polynomials with nonnegative integer coefficients.
 \end{corollary}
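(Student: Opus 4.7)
The plan is to deduce the corollary directly from Corollary \ref{cor 4} by reindexing Demazure characters as key polynomials. By definition, $\kappa_{v\lambda} = \ch B_{v_{\min}^\lambda}(\lambda) = \Delta_{v_{\min}^\lambda} e^\lambda$ and $\kappa_{w\mu} = \ch B_{w_{\min}^\mu}(\mu) = \Delta_{w_{\min}^\mu} e^\mu$.

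Assume the first hypothesis $v_{\min}^\lambda \in \langle\{s_i\mid \langle w\mu, \alpha_i^\vee\rangle \le 0\}\rangle$. Since the condition sees only the weight $w\mu = w_{\min}^\mu\mu$ and not the particular representative $w$, Corollary \ref{cor3} applies equally well with $(v,w)$ replaced by $(v_{\min}^\lambda, w_{\min}^\mu)$ and yields
$$\kappa_{v\lambda}\,\kappa_{w\mu} \;=\; \Delta_{v_{\min}^\lambda}\!\bigl(e^\lambda\,\Delta_{w_{\min}^\mu} e^\mu\bigr).$$
Corollary \ref{cor 4}, applied to the same pair, then expresses this right-hand side as a nonnegative integer linear combination of Demazure characters $\ch B_u(\xi)$ with $u \in W$ and $\xi \in P^+$.

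To finish, I will reindex each such Demazure character as a key polynomial. For any $u \in W$ and $\xi \in P^+$, write $u = u_{\min}^\xi \cdot v$ with $v \in W_\xi$ and $l(u) = l(u_{\min}^\xi) + l(v)$, which is the standard parabolic length-additivity for the coset $uW_\xi$. Concatenating reduced expressions gives $\Delta_u = \Delta_{u_{\min}^\xi}\,\Delta_v$; a one-line computation from the definition of $\Delta_i$ shows that $\Delta_i e^\xi = e^\xi$ whenever $s_i \in W_\xi$, so by induction on $l(v)$ we get $\Delta_v e^\xi = e^\xi$. Therefore $\ch B_u(\xi) = \Delta_u e^\xi = \Delta_{u_{\min}^\xi} e^\xi = \kappa_{u\xi}$. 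Substituting this identification into the expansion above turns the nonnegative integer combination of Demazure characters into a nonnegative integer combination of key polynomials, which is the claim.

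The second hypothesis is handled by symmetry. The product in the character ring $\mathbb{Z}[P]$ is commutative, so $\kappa_{v\lambda}\kappa_{w\mu} = \kappa_{w\mu}\kappa_{v\lambda}$, and the argument just given applies verbatim with $(v,\lambda)$ and $(w,\mu)$ interchanged. The only step beyond invoking Corollary \ref{cor 4} is the reindexing $\ch B_u(\xi) = \kappa_{u\xi}$; this is routine and constitutes the main (minor) verification required.
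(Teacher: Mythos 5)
Your argument is correct and takes essentially the same route as the paper, which deduces the corollary from Corollary~\ref{cor 4} (equivalently from the key-polynomial theorem preceding it) together with the symmetry of the product; your reindexing $\ch B_u(\xi)=\kappa_{u\xi}$ via $\Delta_v e^{\xi}=e^{\xi}$ for $v\in W_{\xi}$ is a correct (if slightly more computational) justification of a step the paper treats as immediate from $B_u(\xi)=B_{u_{\min}^{\xi}}(\xi)$.
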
  
 
	  \section{Tensor decomposition of $B_v(\lambda)\otimes B_w(\infty)$}\label{decomposition for infinity}
	  In this section, we study the decomposition of the tensor product of  Demazure crystals $B_v(\lambda)\otimes B_w(\infty)$ for symmetrizable Kac-Moody Lie algebra $\mathfrak{g}$. We extend \thmref{main thm} by including the Demazure crystal $B_w(\infty)$. \thmref{main thm} together with \thmref{main result 2} is a generalization of \thmref{identity result} for any $v,w\in W$. 
	 
	  \subsection{} We provide necessary and sufficient conditions for the decomposition of the tensor product of Demazure crystals $B_v(\lambda)\otimes B_w(\infty)$ in the following theorem.
	  
	  \begin{theorem}\label{main result 2}
	  	For $\lambda \in P^+$ and $v,w\in W$, the following statements are equivalent.
	  	\begin{enumerate}
	  		\item $B_v(\lambda)\otimes B_w(\infty)$ is isomorphic to a disjoint union of Demazure crystals.
	  		\item $v_{\min}^{\lambda}\in \langle\{s_i\,|\, l(s_i w)< l(w)\}\rangle$.
	  		\item $B_v(\lambda)\otimes B_w(\infty)$ is an extremal subset of $B(\lambda)\otimes B(\infty)$.
	  	\end{enumerate}
	  \end{theorem}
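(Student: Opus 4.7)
The plan is to follow the scheme of \thmref{main thm} closely, using the $\mu=\infty$ case of \thmref{identity result} as the input in place of the $\mu\in P^+$ case. The implication $(1)\Rightarrow(3)$ is immediate: each Demazure crystal $B_u(\infty;\nu)$ is extremal by \remref{extremal subset}, and disjoint unions of extremal subsets are extremal by \remref{union are extremal}.

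For $(2)\Rightarrow(1)$, I would first prove the $B(\infty)$-analog of \lemref{lem2}: if $v\in\langle\{s_i\mid l(s_iw)<l(w)\}\rangle$, then $B_v(\lambda)\otimes B_w(\infty)=T_v(b_\lambda\otimes B_w(\infty))$. The inclusion $\subseteq$ transfers verbatim from part (1) of \lemref{lem2}, using only the $e_i$-stability of $B_w(\infty)$. The reverse inclusion is where the two-case analysis of \lemref{lem2}(2) collapses to a single case: for each $s_{i_j}$ in a reduced expression of $v$, $l(s_{i_j}w)<l(w)$ and \lemref{Ti invariant} directly gives $T_{i_j}B_w(\infty)=B_w(\infty)$, so the same decomposition $f_{i_1}^{c_1}\cdots f_{i_k}^{c_k}(b_\lambda\otimes b)=f_{i_1}^{a_1}\cdots f_{i_k}^{a_k}(b_\lambda)\otimes f_{i_1}^{d_1}\cdots f_{i_k}^{d_k}(b)$ keeps both tensor factors in $B_v(\lambda)$ and $B_w(\infty)$ respectively. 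With this lemma the induction of \thmref{disjoint union} goes through unchanged: writing $v_{\min}^\lambda=s_{i_1}\cdots s_{i_k}$ and using $vW_\lambda=v_{\min}^\lambda W_\lambda$, one has
\[
B_v(\lambda)\otimes B_w(\infty)=T_{i_1}\cdots T_{i_k}\!\left(\coprod_{b\in B_w(\infty)^\lambda}T_{y_{w,b}^\lambda}(b_\lambda\otimes b)\right),
\]
and iterating \lemref{Ti invariant} as in Subsection~\ref{uj subsection} rewrites each summand as $B_{u(b,v)}(\infty;\lambda+wt(b))$, yielding (1).

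For $(3)\Rightarrow(2)$, I argue contrapositively along the lines of \lemref{i implies ii}. Let $v_{\min}^\lambda=s_{i_1}\cdots s_{i_k}$ be reduced and suppose some index $i_j$ satisfies $l(s_{i_j}w)>l(w)$. The case analysis (a)--(c) of \lemref{i implies ii} depends only on $\lambda$ and the minimality of $v_{\min}^\lambda$ in $vW_\lambda$, so it transfers verbatim and produces $b_\lambda':=b_{s_{i_{j+1}}\cdots s_{i_k}\lambda}\in B_v(\lambda)$ with $\varepsilon_{i_j}(b_\lambda')=0$, $n:=\varphi_{i_j}(b_\lambda')>0$, and $f_{i_j}(b_\lambda')\in B_v(\lambda)$. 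The key new step is to replace $b_{w\mu}$ by an element $b'\in B_w(\infty)$ with $\varepsilon_{i_j}(b')=0$ and $f_{i_j}(b')\notin B_w(\infty)$. I would produce such a $b'$ as follows: by \lemref{Ti invariant} one has $T_{i_j}B_w(\infty)=B_{s_{i_j}w}(\infty)\supsetneq B_w(\infty)$, so $B_w(\infty)$ is not $f_{i_j}$-stable; since $B_w(\infty)$ is itself extremal (\remref{extremal subset}), this failure of stability forces at least one $i_j$-string of $B(\infty)$ to meet $B_w(\infty)$ in exactly its highest-weight vector, which is the required $b'$. Then \remref{fib is nonzero} gives $f_{i_j}(b')\ne 0$, and the tensor-product rule yields $f_{i_j}^{n+1}(b_\lambda'\otimes b')=f_{i_j}^n(b_\lambda')\otimes f_{i_j}(b')$, a nonzero element of $B(\lambda)\otimes B(\infty)$ lying outside $B_v(\lambda)\otimes B_w(\infty)$, while both $b_\lambda'\otimes b'$ and $f_{i_j}(b_\lambda')\otimes b'$ lie inside, contradicting extremality.

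The main obstacle is the construction of the top-of-string element $b'\in B_w(\infty)$ whose $f_{i_j}$-image escapes $B_w(\infty)$: since $B_w(\infty)$ has no lowest-weight vector, the finite-case trick of using $b_{w\mu}$ is unavailable, and I instead extract $b'$ from the combination of extremality of $B_w(\infty)$ itself with the non-$f_{i_j}$-stability given by \lemref{Ti invariant}. Everything else is a routine transcription of the $\mu\in P^+$ argument in \secref{decomposition for finite}.
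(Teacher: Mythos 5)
Your proposal is correct and follows essentially the same route as the paper: $(1)\Rightarrow(3)$ via extremality of Demazure crystals, $(2)\Rightarrow(1)$ via the $B(\infty)$-analog of \lemref{lem2} (where, as you note, the two-case analysis collapses because \lemref{Ti invariant} applies directly) followed by the induction of \thmref{disjoint union}, and $(3)\Rightarrow(2)$ by contrapositive. Your construction of the element $b'\in B_w(\infty)$ at the top of an $i_j$-string whose $f_{i_j}$-image escapes $B_w(\infty)$ --- combining the string property (extremality) of $B_w(\infty)$ with $T_{i_j}B_w(\infty)\supsetneq B_w(\infty)$ and \remref{fib is nonzero} --- is exactly the argument the paper invokes (in more compressed form) in \lemref{A implies B}.
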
   
	  The rest of this section covers the proof of \thmref{main result 2}.	
	  \begin{lemma}\label{A implies B}
	  	Let $\lambda \in P^+$ and $v,w \in W$. Assume that $B_v(\lambda)\otimes B_w(\infty)$ is an extremal subset of $B(\lambda)\otimes B(\infty)$ then $v_{\min}^{\lambda}\in \langle\{s_i\,|\, l(s_i w)< l(w)\}\rangle$.
	  \end{lemma}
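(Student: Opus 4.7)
The plan is to argue by contradiction in the same spirit as \lemref{i implies ii}, using that the Demazure crystal $B_w(\infty)$ is itself an extremal subset of $B(\infty)$ (\remref{extremal subset}) to produce a replacement for the role played by $b_{w\mu}$ in the finite-weight case. Suppose $v_{\min}^{\lambda}=s_{i_1}\cdots s_{i_k}$ is a reduced expression with $v_{\min}^{\lambda}\notin \langle\{s_i\,|\,l(s_iw)<l(w)\}\rangle$. Then some $i_j$ in this expression satisfies $l(s_{i_j}w)>l(w)$, and by \lemref{Ti invariant} we have $T_{i_j}B_w(\infty)=B_{s_{i_j}w}(\infty)\supsetneq B_w(\infty)$, so $B_w(\infty)$ is not $f_{i_j}$-stable.

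Next I would produce an element $b\in B_w(\infty)$ with $e_{i_j}(b)=0$ and $f_{i_j}(b)\notin B_w(\infty)$. Pick any $c\in B_w(\infty)$ with $f_{i_j}(c)\notin B_w(\infty)$ (available by the previous paragraph) and let $b:=e_{i_j}^{\varepsilon_{i_j}(c)}(c)$ be the head of the $i_j$-string $S$ through $c$. Since $B_w(\infty)$ is $e_i$-stable, $b\in B_w(\infty)$. By extremality of $B_w(\infty)$, $S\cap B_w(\infty)$ is $\emptyset$, $S$, or $\{b\}$; the first two options fail (the set contains $c$ but not $f_{i_j}(c)$), so $S\cap B_w(\infty)=\{b\}$, forcing $b=c$ and $f_{i_j}(b)\notin B_w(\infty)$.

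On the $B_v(\lambda)$ side, the three-case analysis from the proof of \lemref{i implies ii} on the sign of $\langle s_{i_{j+1}}\cdots s_{i_k}\lambda,\alpha_{i_j}^{\vee}\rangle$ uses only properties of $\lambda$ and $v_{\min}^{\lambda}$ and applies verbatim, yielding $\varphi_{i_j}(b_{s_{i_{j+1}}\cdots s_{i_k}\lambda})=:n>0$ together with $\varepsilon_{i_j}(b_{s_{i_{j+1}}\cdots s_{i_k}\lambda})=0$ and $f_{i_j}(b_{s_{i_{j+1}}\cdots s_{i_k}\lambda})\in B_v(\lambda)$. Since $\varphi_{i_j}(b_{s_{i_{j+1}}\cdots s_{i_k}\lambda})=n>0=\varepsilon_{i_j}(b)$, the tensor product rule places $f_{i_j}^p(b_{s_{i_{j+1}}\cdots s_{i_k}\lambda}\otimes b)=f_{i_j}^p(b_{s_{i_{j+1}}\cdots s_{i_k}\lambda})\otimes b\in B_v(\lambda)\otimes B_w(\infty)$ for $0\leq p\leq n$, while $f_{i_j}^{n+1}(b_{s_{i_{j+1}}\cdots s_{i_k}\lambda}\otimes b)=f_{i_j}^n(b_{s_{i_{j+1}}\cdots s_{i_k}\lambda})\otimes f_{i_j}(b)$ is nonzero by \remref{fib is nonzero} but not in $B_v(\lambda)\otimes B_w(\infty)$ since $f_{i_j}(b)\notin B_w(\infty)$. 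Hence the $i_j$-string of $b_{s_{i_{j+1}}\cdots s_{i_k}\lambda}\otimes b$ meets $B_v(\lambda)\otimes B_w(\infty)$ in neither $\emptyset$, the whole string, nor just its head, contradicting extremality.

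The main obstacle is the construction of $b$: unlike $B_w(\mu)$, the crystal $B_w(\infty)$ has no lowest-weight element to serve as a canonical stand-in for $b_{w\mu}$. Invoking \remref{extremal subset} for $B_w(\infty)$ together with \lemref{Ti invariant} is precisely what lets one extract a highest-of-an-$i_j$-string element whose $f_{i_j}$-image lies outside $B_w(\infty)$, after which the tensor-product step closely parallels the finite-weight proof.
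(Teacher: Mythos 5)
Your proposal is correct and follows essentially the same route as the paper's proof: contradiction via an index $i_j$ with $l(s_{i_j}w)>l(w)$, extraction of $b\in B_w(\infty)$ with $e_{i_j}(b)=0$ and $f_{i_j}(b)\notin B_w(\infty)$, the same three-case claim for $f_{i_j}(b_{s_{i_{j+1}}\cdots s_{i_k}\lambda})$, and the same broken $i_j$-string in the tensor product. Your explicit string-head construction of $b$ merely spells out what the paper obtains by citing \remref{fib is nonzero} and \remref{extremal subset}.
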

	  \begin{proof} Let $v_{\min}^{\lambda} =s_{i_1}\cdots s_{i_k}$ be a reduced expression of $v_{\min}^{\lambda}$ in $W$. Assume that $v_{\min}^{\lambda}\notin \langle\{s_i\,|\, l(s_i w)< l(w)\}\rangle$. Then there exist $i_j$ such that 
	  	 $l(s_{i_j}w)> l(w)$ which implies that $s_{i_j}w\geq w$ in Bruhat order. Using \lemref{Ti invariant} and \remref{demazure subset} we have 
	  	$$T_{i_j}B_w(\infty) = B_{s_{i_j} w}(\infty)\supsetneq B_w(\infty).$$
	  	 Using \remref{fib is nonzero} and \remref{extremal subset} there exist an element $b\in B_w(\infty)$ such that 
	  	\begin{equation}\label{eijb is zero}
	  		f_{i_j}(b)\neq 0, \quad f_{i_j}(b)\notin B_w(\infty), \quad e_{i_j}(b) =0.
	  	\end{equation}
	  	Consider an element $b_{s_{i_{j+1}}\cdots s_{i_k}\lambda}$ in $B_v(\lambda)$. 
	  	
	   \textbf{ Claim:}	$ f_{i_j}(b_{s_{i_{j+1}}\cdots s_{i_k}\lambda}) \neq 0 \text{ and } f_{i_j}(b_{s_{i_{j+1}}\cdots s_{i_k}\lambda})\in B_v(\lambda).$\\
	 The proof of the above claim is exactly the same as given in the proof of \lemref{i implies ii}. \\
	 
	 Since $B_v(\lambda)\otimes B_w(\infty)$ is an extremal subset of $B(\lambda)\otimes B(\infty)$. Using the claim and \eqref{eijb is zero}, we have $\varphi_{i_j}(b_{s_{i_{j+1}}\cdots s_{i_k}}\lambda)>0=\varepsilon_{i_j}(b)$ which implies that
 $$f_{i_j}(b_{s_{i_{j+1}}\cdots s_{i_k}\lambda}\, \otimes b) = f_{i_j}b_{s_{i_{j+1}}\cdots s_{i_k}\lambda}\, \otimes b\in B_v(\lambda)\otimes B_w(\infty).$$
	  Let $\varphi_{i_j}(b_{s_{i_{j+1}}\cdots s_{i_k}\lambda}) =n$ then
	$$f_{i_j}^{n+1}(b_{s_{i_{j+1}}\cdots s_{i_k}\lambda}\, \otimes b) = f_{i_j}^n(b_{s_{i_{j+1}}\cdots s_{i_k}\lambda})\otimes f_{i_j}(b)\notin B_v(\lambda)\otimes B_w(\infty) $$ 
	as $f_{i_j}(b)\notin B_w(\infty)$ and $f_{i_j}^{n+1}(b_{s_{i_{j+1}}\cdots s_{i_k}\lambda}\, \otimes b) \neq 0$.
	 Hence, $B_v(\lambda)\otimes B_w(\infty)$ does not contain the $i_j$-string which implies that it is not an extremal subset of $B(\lambda)\otimes B(\infty)$ which is a contradiction. Therefore, we have $v_{\min}^{\lambda}\in \langle\{s_i\,|\, l(s_i w)< l(w)\}\rangle$.   	
	  \end{proof}
\subsection{} Let $v = s_{i_1}\cdots s_{i_k}$ be a reduced expression of $v$ in $W$. Denote \begin{equation}
	T_{v,\lambda,w} = T_v(b_{\lambda}\otimes B_w(\infty)) = T_{i_1}\cdots T_{i_k}(b_{\lambda}\otimes B_w(\infty)).
\end{equation} 
\begin{lemma}\label{lem infty}
	Let $\lambda \in P^+$ and  $v,w\in W$ such that $s_{i_1} \cdots s_{i_k}$ is a reduced expression of $v$ in $W$. Then the following statements hold:
	\begin{enumerate}
		\item $B_v(\lambda)\otimes B_w(\infty) \subseteq  T_{v,\lambda,w}$.
		\item $B_v(\lambda)\otimes B_w(\infty) = T_{v,\lambda,w}$ if $v\in \langle\{s_i\,|\, l(s_i w)< l(w)\}\rangle$.   	
	\end{enumerate} 
\end{lemma}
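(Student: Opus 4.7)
The plan is to mirror the proof of \lemref{lem2}, but with $B_w(\mu)$ replaced by $B_w(\infty)$, simplifying wherever the infinity setting permits. The two key inputs that make the argument work are (a) $B_w(\infty)$ is stable under every $e_i$ (since it is a subset of the upper normal crystal $B(\infty)$ and Demazure crystals are $e_i$-stable), and (b) under the hypothesis of (2), every $T_{i_j}$ preserves $B_w(\infty)$ on the nose, by \lemref{Ti invariant}.

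For part (1), I would argue exactly as in \lemref{lem2}(1). Take a nonzero $b_1\otimes b_2 \in B_v(\lambda)\otimes B_w(\infty)$. It suffices to show that
$$e_{i_k}^{\varepsilon_{i_k}(b_1\otimes b_2)}\cdots e_{i_1}^{\varepsilon_{i_1}(b_1\otimes b_2)}(b_1\otimes b_2)\in b_\lambda\otimes B_w(\infty).$$
Writing $b_1=f_{i_1}^{c_1}\cdots f_{i_k}^{c_k}(b_\lambda)$ with $c_j$ defined inductively as in the finite-weight case, and iteratively peeling off $e_{i_j}$ via the tensor product rule, one obtains a vector of the form $b_\lambda\otimes e_{i_k}^{s_k}\cdots e_{i_1}^{s_1}(b_2)$ with $s_j\ge 0$. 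Since $B_w(\infty)$ is $e_i$-stable for every $i\in I$, the second tensor factor lies in $B_w(\infty)$, which gives part (1).

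For part (2), the hypothesis $v\in \langle\{s_i\mid l(s_iw)<l(w)\}\rangle$ combined with the fact that $s_{i_1}\cdots s_{i_k}$ is a reduced expression of $v$ in $W$ forces each $s_{i_j}$ to satisfy $l(s_{i_j}w)<l(w)$; this is the standard property of parabolic subgroups in Coxeter groups (every reduced expression of an element of a standard parabolic uses only generators from that parabolic). Applying \lemref{Ti invariant} to each index gives $T_{i_j}B_w(\infty)=B_w(\infty)$ for all $j=1,\dots,k$. Now for $b\in B_w(\infty)$ and nonnegative integers $c_1,\dots,c_k$ with $f_{i_1}^{c_1}\cdots f_{i_k}^{c_k}(b_\lambda\otimes b)\ne 0$, iterated application of the tensor product rule yields
$$f_{i_1}^{c_1}\cdots f_{i_k}^{c_k}(b_\lambda\otimes b)=f_{i_1}^{a_1}\cdots f_{i_k}^{a_k}(b_\lambda)\otimes f_{i_1}^{d_1}\cdots f_{i_k}^{d_k}(b)$$
for suitable $a_j,d_j\ge 0$. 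Since $s_{i_1}\cdots s_{i_k}$ is reduced, the first tensor factor lies in $B_v(\lambda)$; by the $T_{i_j}$-stability just established, the second factor lies in $B_w(\infty)$. Thus $T_{v,\lambda,w}\subseteq B_v(\lambda)\otimes B_w(\infty)$, and together with part (1) this gives equality.

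The only nontrivial step is the reduction that every $s_{i_j}$ in a reduced expression of $v$ lies in $\{s_i\mid l(s_iw)<l(w)\}$; once this Coxeter-theoretic fact is invoked, the rest is a routine transcription of the argument in \lemref{lem2}, and is actually slightly cleaner because \lemref{Ti invariant} handles the $B(\infty)$ case in a single case rather than the two-case split (strict inequality vs.\ equality of pairings) that appeared in the $B(\mu)$ setting. \remref{fib is nonzero} can be used implicitly to ensure that $f_i$-actions on elements of $B_w(\infty)$ never vanish unnecessarily, but the argument does not actually require a separate invocation since nonzeroness is assumed at the outset.
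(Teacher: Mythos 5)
Your proof is correct and follows the same route as the paper: part (1) is a verbatim transcription of the proof of \lemref{lem2}(1) using the $e_i$-stability of $B_w(\infty)$, and part (2) reduces via the standard parabolic-subgroup fact to $l(s_{i_j}w)<l(w)$ for each $j$, whence \lemref{Ti invariant} gives $T_{i_j}B_w(\infty)=B_w(\infty)$ and the rest is the argument of \lemref{lem2}(2). Your explicit remark that the Coxeter-theoretic reduction is the only step needing justification, and that the $B(\infty)$ case avoids the two-case split on $\langle w\mu,\alpha_{i_j}^\vee\rangle$, is accurate and consistent with the paper's (more terse) presentation.
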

\begin{proof}
	\textit{(1)} The proof of this part is same as the proof of part \textit{(1)} of \lemref{lem2} if we take $\mu = \infty$. Therefore, we omit the details.
	
	\textit{(2)} We are given that $v\in \langle\{s_i\,|\, l(s_i w)< l(w)\}\rangle$. Using \lemref{Ti invariant}, for $b\in B_w(\infty)$ we have 
	\begin{equation}\label{Tij in Bwinfty}
		T_{i_j}(b) \in B_w(\infty),\, \forall j \in \{1,2,\cdots,k\}.
	\end{equation} For all $b\in B_w(\infty)$ and non-negative integers $c_1,\cdots, c_k$, we need to prove that
	\begin{equation}\label{fi1..fik infty}
		f_{i_1}^{c_1}\cdots f_{i_k}^{c_k}(b_{\lambda}\otimes b)\in B_v(\lambda)\otimes B_w(\infty) \text{ if } f_{i_1}^{c_1}\cdots f_{i_k}^{c_k}(b_{\lambda}\otimes b)\neq 0.
	\end{equation} 	
 Using \eqref{Tij in Bwinfty}, the proof of \eqref{fi1..fik infty} is same as the proof of \eqref{fi1...fik} given in part \textit{(2)} of \lemref{lem2} if take $\mu = \infty$.	Therefore, we omit the details.
\end{proof}
From \thmref{identity result}, we have 
$$B_e(\lambda)\otimes B_w(\infty) = \coprod_{b\in B_w(\infty)^{\lambda}}T_{y_{w,b}^{\lambda}}(b_{\lambda}\otimes b)$$
For $v\in W$, let $s_{i_1}\cdots s_{i_k}$ be a reduced expression of $v_{\min}^{\lambda}$ in $W$. Define $u_j \in W$ inductively for $j=1,2,\cdots, k$ as defined in \subsecref{uj subsection}. For $b\in B_{w}(\infty)^{\lambda}$, define $u(b,v):= u_1$. Then we have the following theorem.
 \begin{theorem}\label{disjoint union infty}
 	Let $\lambda \in P^+,\,v,w\in W, $ and $s_{i_1}\cdots s_{i_k}$ be a reduced expression of $v_{\min}^\lambda$ in $W$. Assume that $v_{\min}^{\lambda}\in \langle\{s_i\,|\, l(s_i w)< l(w)\}\rangle$ then 
 	$$B_v(\lambda)\otimes B_w(\infty)\cong \coprod_{b\in B_w(\infty)^{\lambda}}B_{u(b,v)}(\infty;\lambda+wt(b)).$$ 
 \end{theorem}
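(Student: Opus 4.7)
The plan is to mimic, almost verbatim, the proof of \thmref{disjoint union}, substituting the infinite Demazure crystal $B_w(\infty)$ for $B_w(\mu)$ throughout and invoking the $\mu = \infty$ versions of the supporting results that the excerpt has already proved (Theorem~\ref{identity result}, \remref{decom of lambda and infty}, and \lemref{lem infty}). The structural input is the same: on the one hand a connected-component decomposition coming from \eqref{decom of highest}--\remref{decom of lambda and infty}, and on the other a ``Demazure-replacement'' identity obtained by applying the operators $T_{i_1}\cdots T_{i_k}$ to a known decomposition of $B_e(\lambda)\otimes B_w(\infty)$.

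First, I would note that $vW_\lambda = v_{\min}^\lambda W_\lambda$ gives $B_v(\lambda) = B_{v_{\min}^\lambda}(\lambda)$. Then, using the standing hypothesis $v_{\min}^\lambda \in \langle\{s_i\,|\, l(s_iw) < l(w)\}\rangle$ together with part \textit{(2)} of \lemref{lem infty}, we obtain
$$B_v(\lambda)\otimes B_w(\infty) \;=\; B_{v_{\min}^\lambda}(\lambda)\otimes B_w(\infty) \;=\; T_{v_{\min}^\lambda,\lambda,w} \;=\; T_{i_1}\cdots T_{i_k}(b_\lambda \otimes B_w(\infty)).$$
Combining this with the $\mu = \infty$ case of \thmref{identity result}, which reads
$$B_e(\lambda)\otimes B_w(\infty) \;=\; \coprod_{b\in B_w(\infty)^\lambda} T_{y_{w,b}^\lambda}(b_\lambda \otimes b),$$
and the fact that $T_{y_{w,b}^\lambda}(b_\lambda\otimes b) \cong B_{y_{w,b}^\lambda}(\infty;\lambda+wt(b))$, yields
$$B_v(\lambda)\otimes B_w(\infty) \;=\; \bigcup_{b\in B_w(\infty)^\lambda} T_{i_1}\cdots T_{i_k}\bigl(T_{y_{w,b}^\lambda}(b_\lambda\otimes b)\bigr).$$

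Second, I would argue that this union is actually disjoint. Each piece $T_{i_1}\cdots T_{i_k}(T_{y_{w,b}^\lambda}(b_\lambda\otimes b))$ is a connected subset of $B_v(\lambda)\otimes B_w(\infty)$ containing the primitive element $b_\lambda\otimes b$, so it is contained in the connected component $\mathcal{F}(b_\lambda\otimes b, v)$ supplied by the decomposition \eqref{decom of highest} applied (via \remref{decom of lambda and infty}) to $B(\lambda)\otimes B(\infty)$. Since the ambient decomposition is disjoint and the pieces cover the whole tensor product, each containment must be an equality, giving
$$B_v(\lambda)\otimes B_w(\infty) \;=\; \coprod_{b\in B_w(\infty)^\lambda} T_{i_1}\cdots T_{i_k}\bigl(B_{y_{w,b}^\lambda}(\infty;\lambda+wt(b))\bigr).$$

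Third, the remaining work is an inductive identification, which is completely formal: applying \lemref{Ti invariant} to $B_{y_{w,b}^\lambda}(\infty;\lambda+wt(b))$ and then successively to the resulting Demazure crystals, precisely matching the case analysis in the definition of $u_k, u_{k-1}, \ldots, u_1$ in \subsecref{uj subsection}, gives
$$T_{i_j}\cdots T_{i_k}\bigl(B_{y_{w,b}^\lambda}(\infty;\lambda+wt(b))\bigr) \;=\; B_{u_j}(\infty;\lambda+wt(b))$$
for each $j$, so the outermost application produces $B_{u_1}(\infty;\lambda+wt(b)) = B_{u(b,v)}(\infty;\lambda+wt(b))$, as required.

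The only place where the infinite case has to be checked rather than copied is in the disjointness step: the $B(\infty)$-version of \eqref{decom of highest} (\remref{decom of lambda and infty}) must supply genuine connected components, and the statement that every primitive element of $B(\lambda)\otimes B(\infty)$ has the form $b_\lambda\otimes b$ for $b\in B(\infty)^\lambda$ must hold. Both are already part of the paper's setup. I expect no serious obstacle; the proof is essentially the $\mu = \infty$ shadow of the proof of \thmref{disjoint union}, and the hard analytical content has been packaged into \lemref{lem infty} and \lemref{Ti invariant}.
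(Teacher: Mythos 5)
Your proposal is correct and follows essentially the same route as the paper, which itself proves this theorem by establishing the $\mu=\infty$ decomposition via \remref{decom of lambda and infty}, applying \lemref{lem infty} to get $B_v(\lambda)\otimes B_w(\infty) = T_{i_1}\cdots T_{i_k}(b_\lambda\otimes B_w(\infty))$, and then repeating the connected-component and inductive $T_{i_j}$-identification argument from \thmref{disjoint union} verbatim with $B_{y_{w,b}^\lambda}(\infty;\lambda+wt(b))$ in place of $B_{y_{w,b}^\lambda}(\lambda+wt(b))$. The one point you flag for verification (that \remref{decom of lambda and infty} supplies genuine disjoint connected components and that primitive elements have the form $b_\lambda\otimes b$) is indeed part of the paper's setup, so no gap remains.
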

 \begin{proof}
 	Using \remref{decom of lambda and infty}, we have 
 		\begin{equation}
 		B_v(\lambda)\otimes B_w(\infty) = \coprod_{b\in B_w(\infty)^\lambda} \mathcal{F}(b_{\lambda}\otimes b,v)
 	\end{equation} 
 	where $\mathcal{F}(b_{\lambda}\otimes b,v)$ denotes the connected component of $B_v(\lambda)\otimes B_w(\infty)$ containing $b_\lambda \otimes b$.	
 	Using \lemref{lem infty}, and $vW_{\lambda} = v_{\min}^{\lambda}W_{\lambda}$, we have
 	\begin{equation}
 		B_v(\lambda)\otimes B_w(\infty) = B_{v_{\min}^{\lambda}}(\lambda)\otimes B_{w}(\infty)
 		= T_{v_{\min}^{\lambda},\lambda,w} = T_{i_1}\cdots T_{i_k}(b_{\lambda} \otimes B_{w}(\infty))
 	\end{equation} 
 	The rest of the proof remains exactly the same as given in \thmref{disjoint union} by taking $\mu = \infty$ and replacing $B_{y_{w,b}^\lambda}(\lambda+wt(b))$ with $B_{y_{w,b}^\lambda}(\infty; \lambda+wt(b))$.
 \end{proof}
 \subsection{Proof of the \thmref{main result 2}} 
 
 $(1)\implies (3)$ Using \remref{extremal subset} and \remref{union are extremal} we have $B_v(\lambda)\otimes B_w(\infty)$ is an extremal subset of $B(\lambda)\otimes B(\infty)$.
 
 $(3)\implies (2)$ Using \lemref{A implies B}, we have $v_{\min}^{\lambda}\in \langle \{s_i\, |\, l(s_i w)< l(w)\}\rangle$.
 
 $(2)\implies(1)$ Let $ s_{i_1}\cdots s_{i_k}$ be a reduced expression of $v_{\min}^\lambda$ in $W$. Using \thmref{disjoint union infty}, we get that $B_v(\lambda)\otimes B_w(\infty)$ is isomorphic to a disjoint union of Demazure crystals.

 Let us explain our results with the help of the following example.
 \begin{example}
 	Let $\mathfrak{g} = \mathfrak{sl}_4(\mathbb{C}), \, I: = \{1,2,3\},$ and $\Delta = \{\alpha_1, \alpha_2, \alpha_3\}$, the set of simple roots of $\mathfrak{g}$. The Weyl group $W = \langle\{s_1,s_2,s_3\}\rangle$ and $\{\omega_1,\omega_2,\omega_3\}$ denotes the set of fundamental weights of $\mathfrak{g}$.
 	
 	 Let $\lambda = \omega_2\in P^+$, $v=s_2$, and $w= s_2s_1s_3s_2$ in $W$. Using \thmref{identity result}, $B_e(\omega_2)\otimes B_{s_2s_1s_3s_2}(\infty)$ is isomorphic to a disjoint union of Demazure crystals as given in \cite{Anthony}. 
 	$$\begin{array}{ll}
 		B_e(\omega_2)\otimes B_{s_2s_1s_3s_2}(\infty) &\cong B_{s_1 s_3}(\infty; \omega_2)\coprod B_{s_3 s_1 s_2}(\infty;\omega_2-\alpha_2)\coprod B_{s_2 s_1 s_2}(\infty;\omega_2-\alpha_1-\alpha_2)\\
 		& \coprod B_{s_2 s_3 s_2}(\infty;\omega_2-\alpha_2-\alpha_3)\coprod B_{s_2 s_1 s_3}(\infty;\omega_2-\alpha_1-\alpha_2-\alpha_3)\\
 		&\coprod B_{s_2 s_1 s_3 s_2}(\infty;\omega_2-\alpha_1-2\alpha_2-\alpha_3).
 	\end{array}$$
 Since $v = v_{\min}^\lambda = s_2$ and $l(s_2 w)<l(w)$. Using \thmref{main result 2}, $B_{s_2}(\omega_2)\otimes B_{s_2s_1s_3s_2}(\infty)$ is isomorphic to a disjoint union of Demazure crystals. Using \thmref{disjoint union infty}, we can explicitly describe the Demazure crystals that appear in the decomposition. 
 	 	$$\begin{array}{ll}
 	 	B_{s_2}(\omega_2)\otimes B_{s_2s_1s_3s_2}(\infty) &\cong B_{s_2s_1 s_3}(\infty; \omega_2)\coprod B_{s_2s_3 s_1 s_2}(\infty;\omega_2-\alpha_2)\coprod B_{s_2 s_1 s_2}(\infty;\omega_2-\alpha_1-\alpha_2)\\
 	 	& \coprod B_{s_2 s_3 s_2}(\infty;\omega_2-\alpha_2-\alpha_3)\coprod B_{s_2 s_1 s_3}(\infty;\omega_2-\alpha_1-\alpha_2-\alpha_3)\\
 	 	&\coprod B_{s_2 s_1 s_3 s_2}(\infty;\omega_2-\alpha_1-2\alpha_2-\alpha_3).
 	 \end{array}$$
 \end{example} 
 \section{Acknowledgements}
 The author would like to thank Sankaran Viswanath for many helpful discussions and useful comments. The author acknowledge the hospitality and excellent working conditions at the Institute of Mathematical Sciences, Chennai, India where a part of this work was done.  
	\bibliographystyle{plain}
	\bibliography{References}
	\end{document}